\newtheorem{theorem}{Theorem}[section]
\newtheorem*{theoremA*}{Theorem A}
\newtheorem*{theoremB*}{Theorem B}
\newtheorem{lemma}[theorem]{Lemma}
\newtheorem{proposition}[theorem]{Proposition}
\newtheorem{corollary}[theorem]{Corollary}
\newtheorem*{definition*}{Definition}
\newtheorem*{observation*}{Observation}
\newtheorem*{assumption*}{Assumption}
\newtheorem*{question*}{Question}
\newtheorem*{problem*}{Problem}
\theoremstyle{remark}
\newtheorem*{remark*}{Remark}
\newcommand{\R}{\mathbb{R}}
\newcommand{\Z}{\mathbb{Z}}
\newcommand{\D}{\mathbb{D}}
\newcommand{\C}{\mathbb{C}}
\newcommand{\T}{\mathbb{T}}
\newcommand{\hh}{\mathbb{H}}
\newcommand{\Hol}{\mathrm{Hol}}
\newcommand{\Harm}{\mathrm{Harm}}
\newcommand{\supp}{\mathrm{supp}}
\newcommand{\sgn}{\mathrm{sgn}}
\newcommand{\ev}{\mathrm{ev}}
\newcommand{\pc}{\mathrm{Poi}}
\newcommand{\hb}{\mathscr{B}}
\newcommand{\an}{\text{\, and \,}}
\newcommand{\where}{\text{\, where \,}}
\numberwithin{equation}{section}
\begin{document}

\title[Weighted Da Lio-Rivi\`{e}re-Wettstein inequality]{Da Lio-Rivi\`{e}re-Wettstein-type inequality for weighted Bergman spaces}

\author
{Yong Han}
\address
{Yong HAN: College of Mathematics and Statistics, Shenzhen University, Shenzhen 518060, Guangdong, China}
\email{hanyong@szu.edu.cn}

\author
	{Yanqi Qiu}
	\address
	{Yanqi QIU: School of Fundamental Physics and Mathematical Sciences, HIAS, University of Chinese Academy of Sciences, Hangzhou 310024, China}
	\email{yanqi.qiu@hotmail.com, yanqiqiu@ucas.ac.cn}

\author{Zipeng Wang}
\address{Zipeng WANG: College of Mathematics and Statistics, Chongqing University, Chongqing
401331, China}
\email{zipengwang2012@gmail.com}

\thanks{YH is supported by the grant NSFC 12201419, NSFC 12131016.  ZW is supported by NSFC 12471116.}

\begin{abstract}
In this paper, inspired by the work of Da Lio-Rivi\`{e}re-Wettstein, we investigate the boundary-value characterizations of weighted Bergman spaces and  establish a weighted   Da Lio-Rivi\`{e}re-Wettstein inequality.   In addition, we obtain analogous results on the upper plane which does not seem to be a direct consequence of the ones on the unit disk.
\end{abstract}

\subjclass{Primary  30H10, 30H20}
\keywords{weigthed Bergman spaces, weighted harmonic Bergman spaces, Da Lio-Rivi\`{e}re-Wettstein inequality}

\maketitle

\setcounter{equation}{0}

\section{Introduction}

This research  is  inspired by Da Lio, Rivi\`{e}re and Wettstein's work \cite{Da-2021} on a fractial version  Bourgain-Brezis type inequalities. Their work can be reformulated as the following equality (and norm-equivalence) of function spaces:
\begin{align}\label{DRW-eq}
(B^2(\D)  + h^1(\D))  \cap \Hol(\D)= A^2(\D),
\end{align}
where $\Hol(\D)$ denotes the space of holomorphic functions on the unit disk $\D$ and the precise definitions of $B^2(\D)$ and $h^1(\D)$ are given in \S \ref{sec-main-results}.

In this paper, by generalizing the equality \eqref{DRW-eq} to radial-weighted Bergman spaces, we  prove that the radial-weighted Da Lio-Rivi\`{e}re-Wettstein equality \eqref{DRW-eq} holds if and only if the measure is $(1,2)$-Carleson.

\subsection{Main results}\label{sec-main-results}

 The harmonic Hardy space $h^1(\D)$ is defined by
\[
h^1(\D):= \Big\{ u \in \Harm(\D)\Big| \| u\|_{h^1(\D)} = \sup_{0<r<1}  \frac{1}{2\pi}\int_{0}^{2\pi} |u(r e^{i\theta})| d\theta <\infty \Big\},
\]
where $\Harm(\D)$ denotes the space of all harmonic functions on $\D$.
The Hardy space $H^1(\D)$ is then defined as $H^1(\D) : = h^1(\D)\cap \Hol(\D)$.

Throughout the paper, all measures are assumed to be positive measures.  Given a  measure $\mu$ on $\D$, the associated  weighted harmonic Bergman space $B^2(\D, \mu)$ and weighted Bergman space $A^2(\D, \mu)$ are defined as
\[
B^2(\D, \mu):  = L^2(\D, \mu)\cap \Harm(\D) \an  A^2(\D, \mu): = L^2(\D, \mu)\cap \Hol(\D),
\]
both of which inherit the norm of $L^2(\D,\mu)$.   If $\mu $ is the Lebesgue measure on $\D$, then we use the simplified notation $B^2(\D)$ and $A^2(\D)$.

Let  $\mu$ be a measure on $\D$. Then $\mu$  is called a  $(1,2)$-Carleson measure if  there exists a constant $C>0$ such that
\begin{align}\label{def-onetwo}
\Big(\int_\D |f(z)|^2 \mu(dz) \Big)^{1/2}\le C \| f\|_{H^1(\D)}, \quad \forall f\in H^1(\D).
\end{align}

We say that $\mu$ is  {\it boundary-touching} if its support is not relatively compact in $\D$.  
\begin{theorem}\label{thm-disk-stable}
Let $\mu$ be a  radial boundary-touching  measure on $\mathbb{D}$.   Then  
\[
(B^2(\D,\mu) +  h^1(\D))\cap \Hol(\D)= A^2(\D, \mu)
\]
 if and only if  $\mu$ is a $(1,2)$-Carleson measure.
\end{theorem}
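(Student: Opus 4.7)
I would apply the closed graph theorem. By condition~(iii) in the definition, any $f\in H^1(\D)=h^1(\D)\cap\Hol(\D)$ may be written trivially as $f=0+f$ with $0\in B^2(\D,\mu)$ and $f\in h^1(\D)$, placing it in $B^2(\D,\mu)\cap\Hol(\D)=A^2(\D,\mu)$. Thus $H^1(\D)\subset A^2(\D,\mu)$ as sets. The inclusion has closed graph---an $H^1$-Cauchy sequence converges uniformly on compacta while an $A^2(\mu)$-Cauchy sequence converges $\mu$-a.e.\ along a subsequence, and the two limits agree on $\supp\mu$---and is therefore bounded, which is exactly \eqref{def-onetwo}.

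\textbf{If direction: reduction.} Assume $\mu$ is $(1,2)$-Carleson and verify condition~(iii): take $f\in\Hol(\D)$ with $f=v+u$, $v\in B^2(\D,\mu)$, $u\in h^1(\D)$, and show $f\in A^2(\D,\mu)$. Apply the harmonic Riesz decomposition $v=v_++\overline{v_-}$, $u=u_++\overline{u_-}$ with $v_\pm,u_\pm\in\Hol(\D)$ and $v_-(0)=u_-(0)=0$. Holomorphicity of $f$ forces $v_-+u_-\equiv 0$. The radiality of $\mu$ provides $L^2(\mu)$-orthogonality of $\{z^n\}_{n\ge 0}$ and $\{\bar z^m\}_{m\ge 1}$, so $v\in B^2(\D,\mu)$ splits orthogonally and $v_+,v_-\in A^2(\D,\mu)$; therefore $u_-=-v_-\in A^2(\D,\mu)$. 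The theorem is reduced to the quantitative \emph{Bourgain--Brezis type inequality}: for every $u\in h^1(\D)$ whose antiholomorphic part satisfies $u_-\in A^2(\D,\mu)$,
\begin{equation}\label{BB-key}
\|u_+\|_{A^2(\D,\mu)}\le C\bigl(\|u\|_{h^1(\D)}+\|u_-\|_{A^2(\D,\mu)}\bigr).
\end{equation}

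\textbf{Proof strategy for \eqref{BB-key}.} I would proceed by duality. The radial structure of $\mu$ diagonalises everything: with moments $M_n=\int_\D|z|^{2n}\,d\mu$, the $(1,2)$-Carleson hypothesis reads $\sum_{n\ge 0}|\hat f(n)|^2 M_n\le C\|f\|^2_{L^1(\partial\D)}$ for $f\in H^1(\D)$. Regularise $u$ by dilations $u_r(z)=u(rz)$ and pair with a test $g\in A^2(\D,\mu)$:
\[
\int_\D u_+(rz)\,\overline{g(z)}\,d\mu(z) \;=\; \int_\D u(rz)\,\overline{g(z)}\,d\mu(z)-\int_\D\overline{u_-(rz)}\,\overline{g(z)}\,d\mu(z).
\]
The second integral is dominated by $\|u_-\|_{A^2(\D,\mu)}\|g\|_{A^2(\D,\mu)}$ via Cauchy--Schwarz. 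For the first, the $(1,2)$-Carleson condition via $H^1$--$\mathrm{BMOA}$ duality produces a $\mathrm{BMOA}$ symbol $\psi_g=\sum_{n\ge 0} g_n M_n z^n$ with $\|\psi_g\|_{\mathrm{BMOA}}\lesssim\|g\|_{A^2(\D,\mu)}$, and the integral becomes a pairing of $\psi_g$ against the boundary measure $\nu$ of $u$; sending $r\uparrow 1$ with uniform-in-$r$ control should yield~\eqref{BB-key}.

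\textbf{Main obstacle.} The crux is that $u_+$, the Riesz projection of $u\in h^1$, in general fails to lie in $H^1(\D)$, so the $(1,2)$-Carleson embedding cannot be invoked on $u_+$ alone. The content of \eqref{BB-key} is that this failure is precisely compensated by the regularity of $u_-$, via the cancellation $u_+=u-\overline{u_-}$. The naive triangle-inequality estimate $\|u_+(r\,\cdot\,)\|_{H^1}\le\|u(r\,\cdot\,)\|_{L^1}+\|u_-(r\,\cdot\,)\|_{L^1}$ fails because $A^2(\D,\mu)\not\subset H^1(\D)$; nor can one pair a $\mathrm{BMOA}$ symbol against a generic boundary measure. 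Extracting the correct quantitative cancellation---absorbing the non-$H^1$ defect of $u_+$ into the Cauchy--Schwarz term controlled by $\|u_-\|_{A^2(\D,\mu)}$ without circular reliance on $\|u\|_{L^2(\mu)}$---is the technical heart of the argument, and is where the Bourgain--Brezis method as developed by Da~Lio, Rivi\`ere, and Wettstein~\cite{Da-2021} enters decisively.
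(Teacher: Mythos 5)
Your ``only if'' direction and your reduction are essentially the paper's: holomorphic stability gives the set inclusion $H^1(\D)\subset A^2(\D,\mu)$, and (after noting that this forces $\mu$ to be finite, so that $A^2(\D,\mu)$ is complete) the Closed Graph Theorem upgrades it to the embedding \eqref{def-onetwo}; and your holomorphic/antiholomorphic splitting reduces the ``if'' direction to the estimate $\|u_+\|_{A^2(\D,\mu)}\le C\bigl(\|u\|_{h^1(\D)}+\|u_-\|_{A^2(\D,\mu)}\bigr)$, which for radial $\mu$ is equivalent to the paper's inequality \eqref{A-mu-BH} (equivalently \eqref{weight-BHB}). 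Up to this point the proposal is fine.

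The genuine gap is that you do not prove this key inequality, and the duality strategy you sketch for it collapses. In your pairing the ``cancellation'' term $\int_\D \overline{u_-(rz)}\,\overline{g(z)}\,d\mu(z)$ is identically zero: it is the integral against the radial measure $\mu$ of the antiholomorphic function $\overline{u_-(rz)g(z)}$, whose constant Fourier coefficient is $\overline{u_-(0)g(0)}=0$, so rotation invariance kills it. Hence the scheme never uses $\|u_-\|_{A^2(\D,\mu)}$ at all and amounts to the claim $\|u_+\|_{A^2(\D,\mu)}\lesssim\|u\|_{h^1(\D)}$, i.e.\ boundedness of $\mathcal{Q}_{+}:h^1(\D)\to A^2(\D,\mu)$; as the paper points out, this is equivalent to \eqref{k-theta-Berg} (for radial $\mu$, to \eqref{conv-sing-int}) and fails for radial $(1,2)$-Carleson measures such as Lebesgue measure --- exactly the obstacle you yourself describe in your final paragraph, where you defer to ``the Bourgain--Brezis method'' without executing it. What closes this gap in the paper is Theorem \ref{thm-BBB-weight} via Proposition \ref{thm:disk-2B}: one takes \emph{arbitrary} decompositions of both $f$ and its conjugate function $\mathcal{H}f$ into $H_\mu(\T)+L^1(\T)$ pieces, handles the mixed terms by Cauchy--Schwarz, and pairs the dangerous product of the two $L^1$ pieces against the single bounded function $w_\sigma$ with $\widehat{w}_\sigma(n)=\sgn(n)\sigma_n$, whose membership in $L^\infty(\T)$ (Lemmas \ref{prop:bergman-carleson} and \ref{lem-bdd-w}, resting on Garnett's lemma) is precisely where the Carleson condition enters. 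Some device of this kind --- exploiting the pair $(f,\mathcal{H}f)$ to convert the $L^1\times L^1$ interaction into a pairing with a bounded symbol --- is the missing heart of your argument; without it the proposal is a correct reduction plus an unproved, and as sketched unprovable, key estimate.
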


For a  finite measure $\mu$ on $\D$ which is not necessarily radial, a simple situation (which in general is  rather different from the situation in Theorem \ref{thm-disk-stable}, see Corollary \ref{prop-complement} below for more details) for the holomorphic stability of $(B^2(\D,\mu), h^1(\D))$ is provided as follows.  Consider the  linear map  $\mathcal{Q}_{+}$  defined  on the space $\Harm(\D)$  by
\begin{align}\label{def-Q-proj}
\mathcal{Q}_{+} \Big(\sum_{n\ge 0} a_n z^n +  \sum_{n\ge 1} b_n \bar{z}^n \Big) := \sum_{n\ge 0}  a_n z^n.
\end{align}
Then the equality
$
(B^2(\D, \mu) + h^1(\D)) \cap \Hol(\D)= A^2(\D,\mu)
$  holds if  both
\begin{align}\label{h-to-A}
\mathcal{Q}_{+}: h^1(\D)\longrightarrow A^2(\D, \mu)
\end{align}
and
\begin{align}\label{B-to-A}
\mathcal{Q}_{+}: B^2(\D, \mu)\longrightarrow A^2(\D,\mu)
\end{align}
are bounded linear operators.

It is not hard to see that the operator  \eqref{h-to-A} is bounded if and only if
\begin{align}\label{k-theta-Berg}
\sup_{\theta \in [0, 2\pi)} \int_\D \frac{\mu(dz)}{|1- e^{-i\theta} z|^2}<\infty.
\end{align}
Hence the boundedness of the operator \eqref{h-to-A}   in general fails even for a radial $(1,2)$-Carleson measure.  On the other hand, the boundedness of \eqref{B-to-A} holds for all radial finite measure $\mu$ on $\D$. For general weight, a  sufficient condition for the boundedness of the operator  \eqref{B-to-A} is that the {\it Bergman projection} being bounded on $L^2(\D, \mu)$ (which then is equivalent to the condition that   $\mu$ is B\'ekoll\'e-Bonami's $B_2$-weight, see \cite{B-B} for more details on Bergman projections). Consequently, for any $B_2$-weight $\mu$ on $\D$ satisfying \eqref{k-theta-Berg}, we have  
\[
(B^2(\D, \mu)  +  h^1(\D))\cap \Hol(\D) =   A^2(\D, \mu). 
\]

 For a radial boundary-touching finite  measure $\mu$ on $\D$, the space $B^2(\D,\mu)$ is complete and  $B^2(\D,\mu)+h^1(\D)$ is a Banach space when equipped with the norm:
\[
\| f\|_{B^2(\D,\mu)+h^1(\D)}: = \inf\Big\{\| g \|_{B^2(\D,\mu)}  + \| h\|_{h^1(\D)}\Big| f= g + h, \,\, g \in B^2(\D,\mu) \an h \in h^1(\D) \Big\}.
\] Recall that a closed subspace $B_1$ of a Banach space $B$ is called {\it complemented} in $B$  if there exists a bounded linear projection from $B$ onto $B_1$.

\begin{corollary}\label{prop-complement}
Let $\mu$ be a radial  boundary-touching $(1,2)$-Carleson measure  on $\D$. Then
\[
(B^2(\D,\mu)+h^1(\D)) \cap \Hol(\D)
\]
 is a closed subspace of $B^2(\D,\mu)+h^1(\D)$. Moreover, the above subspace  is complemented in $B^2(\D,\mu)+h^1(\D)$ if and only if $\mu$ satisfies the condition \begin{align}\label{conv-sing-int}
\int_\D\frac{\mu(dz)}{1-|z|^2}<\infty.
\end{align}
\end{corollary}

Let $\hh = \{z\in \C| \Im (z)>0\}$ denote the upper half plane. In this case, we study the harmonic Zen-type spaces (which  reduce to the usual harmonic Bergman space when the weight is the Lebesgue measure on $\hh$),  see \cite{zen-2009,Pott-jfa}.

A measure $\mu$ on $\hh$ is called {\it boundary-touching} if its support is not contained in
\begin{align}\label{eqn-hh-varepsilon}
\hh_{\varepsilon}:  = \{z\in \C| \Im(z)>\varepsilon\}
\end{align}
for any $\varepsilon>0$. 
Given a  horizontal translation-invariant  boundary-touching measure $\mu$  on $\hh$,  define the harmonic Zen-type space by
\begin{align}\label{def-hb}
\hb^2(\hh, \mu): = \Big\{g \in \Harm(\hh)\Big| \| g\|_{\hb^2(\hh, \mu)}  = \sup_{L>0} \Big(\int_\hh | g(z+iL)|^2 \mu(dz)\Big)^{1/2} <\infty \Big\},
\end{align}
where $\Harm(\hh)$ denotes the set of all harmonic functions on $\hh$.   For a  horizontal translation-invariant  boundary-touching measure $\mu$ on $\hh$, the space $\hb^2(\hh,\mu)$ is always complete and thus is a Hilbert space.
   Recall that the Poisson kernel for $\mathbb{H}$ at the point $z = x+i y \in \hh$ is given by
\begin{align}\label{H-Poi-kernel}
P_{z}^{\mathbb{H}}(t):=\frac{1}{\pi}\frac{y}{(x-t)^2+y^2}, \quad t \in \R.
\end{align}
For any $g \in \Harm(\hh)$ and  any $y>0$, define $g_y: \R\rightarrow \C$ by
\begin{align}\label{def-gline}
g_y(x) : =  g(x+iy), \, \forall \, x\in \R.
\end{align}
  Set
\begin{align}\label{def-pc}
\pc(\hh): = \Big\{g\in \Harm(\hh)\Big|   g_y \in L^2(\R) \an g_{y +y'} = P_{iy'}^\hh* g_{y} \, \,  \forall y, y'>0\Big\}.
\end{align}
We given in the following the relation between the Zen-type space $\hb^2(\hh, \mu)$ and the usual harmonic Bergman space
\[
B^2(\hh, \mu): = L^2(\hh,\mu) \cap \Harm(\hh).
\]
For any horizontal translation-invariant boundary-touching measure $\mu$ on $\hh$,
\begin{align}\label{hb-and-b}
\hb^2(\hh, \mu) = B^2(\hh, \mu)  \cap \pc(\hh).
\end{align}
The proof of \eqref{hb-and-b} is given in  Proposition \ref{prop-zen} in the Appendix. The harmonic Hardy space $h^1(\hh)$ is defined by
\[
h^1(\hh):= \Big\{ u \in \Harm(\hh)\Big| \| u\|_{h^1(\hh)} = \sup_{y>0}  \int_{\R} |u(x + i y)| dx<\infty \Big\}.
\]
The Hardy space $H^1(\hh)$ is then defined as $H^1(\hh) : = h^1(\hh)\cap \Hol(\hh)$. A measure $\mu$ on $\hh$ is called a  $(1,2)$-Carleson measure if  there exists a constant $C>0$ such that
\begin{align}\label{def-onetwoup}
\Big(\int_\hh |f(z) |^2 \mu(dz)\Big)^{1/2} \le C \| f\|_{H^1(\hh)}, \quad \forall f\in H^1(\hh).
\end{align}

\begin{theorem}\label{thm-upper-stable}
Let $\mu$ be a horizontal translation-invariant boundary-touching measure on $\hh$. Then
\[
(\hb^2(\hh, \mu) +  h^1(\hh))\cap \Hol(\hh) = \hb^2(\hh, \mu)\cap \Hol(\hh) 
\]
 if and only if $\mu$ is a $(1,2)$-Carleson measure.
\end{theorem}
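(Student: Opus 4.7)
The plan is to parallel the proof of Theorem \ref{thm-disk-stable}, trading Fourier series on $\Z$ for Fourier analysis on $\R$ made possible by the horizontal translation invariance of $\mu$. For the \emph{necessity}, suppose the pair is holomorphically stable. For every $F \in H^1(\hh) \subset \Hol(\hh) \cap h^1(\hh)$, the trivial decomposition $F = 0 + F$ with $0 \in \hb^2(\hh,\mu)$ and $F \in \Hol(\hh)$ forces $F \in \hb^2(\hh,\mu)$ by stability. The resulting inclusion $H^1(\hh) \hookrightarrow \hb^2(\hh,\mu)$ has closed graph, since convergence in either norm implies pointwise convergence on $\hh$, so it is continuous by the closed graph theorem. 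Fatou's lemma applied to the shifts $F(\cdot + iL)$ as $L \to 0^+$ yields $\|F\|_{L^2(\mu)} \le \|F\|_{\hb^2(\hh,\mu)} \le C \|F\|_{H^1(\hh)}$, which is the $(1,2)$-Carleson condition.

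For the \emph{sufficiency}, assume $\mu$ is $(1,2)$-Carleson and let $h = f + g$ with $f \in \hb^2(\hh,\mu)$, $g \in \Hol(\hh)$, $h \in h^1(\hh)$. Using horizontal invariance, disintegrate $\mu = dx \otimes \sigma(dy)$ on $\hh$. Since $h$ is the Poisson integral of a finite complex measure $\nu$ on $\R$, one checks $h, f, g \in \pc(\hh)$, and Fourier analysis in $x$ gives $\widehat{h_y}(\xi) = e^{-y|\xi|}\hat\nu(\xi)$ together with the analogous semigroup identities for $f$ and $g$. Splitting the Fourier data at $\xi = 0$ yields decompositions $h = h_+ + \overline{h_-}$ and $f = f_+ + \overline{f_-}$ with $h_\pm, f_\pm \in \Hol(\hh) \cap \pc(\hh)$, and the holomorphy of $g$ forces $h_- = f_-$ and $h_+ = f_+ + g$. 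Plancherel on each slice (using the disjoint Fourier supports) combined with the Poisson-contractive monotonicity of $L \mapsto \|F(\cdot+iL)\|_{L^2(\mu)}$ for $F \in \pc(\hh)$ gives
\begin{equation*}
\|h\|_{\hb^2(\hh,\mu)}^2 = \|h_+\|_{\hb^2(\hh,\mu)}^2 + \|h_-\|_{\hb^2(\hh,\mu)}^2.
\end{equation*}
Since $h_- = f_-$, one has $\|h_-\|_{\hb^2(\hh,\mu)} \le \|f\|_{\hb^2(\hh,\mu)}$, and the task reduces to bounding the holomorphic part $\|h_+\|_{\hb^2(\hh,\mu)}$.

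On the Fourier side, $\|h_+\|_{\hb^2(\hh,\mu)}^2 = \frac{1}{2\pi}\int_0^\infty |\hat\nu(\xi)|^2 \hat\sigma(\xi)\, d\xi$ with $\hat\sigma(\xi) := \int_0^\infty e^{-2y\xi}\, d\sigma(y)$, while $(1,2)$-Carleson reads $\int_0^\infty |\widehat{F_0}(\xi)|^2 \hat\sigma(\xi)\, d\xi \le C \|F_0\|_{L^1(\R)}^2$ for $F \in H^1(\hh)$. I would establish the half-plane analogue of the Bourgain-Brezis inequality used in the proof of Theorem \ref{thm-disk-stable}, regularizing $h$ by the vertical shifts $h^{(\varepsilon)}(z) := h(z+i\varepsilon) \in h^1(\hh) \cap h^\infty(\hh)$, which split cleanly into holomorphic and anti-holomorphic $H^p$ pieces, then running the $(1,2)$-Carleson argument to produce a bound on $\|h_+^{(\varepsilon)}\|_{\hb^2(\hh,\mu)}$ uniform in $\varepsilon$ in terms of $\|h\|_{h^1(\hh)}$ and $\|f\|_{\hb^2(\hh,\mu)}$, and finally passing $\varepsilon \to 0^+$ via Fatou. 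The \emph{main obstacle} is this step: the Riesz projection is unbounded on $L^1(\R)$, so $h_+$ is generally not in $H^1(\hh)$ and $(1,2)$-Carleson cannot be invoked for $h_+$ directly; one must exploit the identity $h_+ = f_+ + g$ together with the constraint $\widehat{f_0}(\xi) = \hat\nu(\xi)$ on $\xi < 0$ (inherited from $f \in \hb^2(\hh,\mu)$), so that $f_+$ compensates for the failure of $H^1$-boundedness of the Riesz projection of $\nu$. The non-compactness of $\hh$ adds genuine new difficulty over the disk case, since the Zen-type structure is not conformally invariant (as noted in the remark preceding the theorem) and a conformal transfer from $\D$ is not available, forcing a direct Fourier/duality argument on $\hh$ with careful truncations to control behaviour both near $\R$ and near $\infty$.
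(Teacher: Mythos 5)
Your necessity argument is fine and coincides with the paper's (closed graph theorem plus the identity $\|F\|_{\hb^2(\hh,\mu)}^2=\int_\hh|F|^2d\mu$ for $F\in H^1(\hh)\subset\pc(\hh)$), and your frequency-splitting reduction for sufficiency is structurally consistent with the paper's proof: there one shows (Claim I) that the vertical shift of the holomorphic summand lies in $H^2(\R)$, so its spectrum sits in $[0,\infty)$ — note that this really does require the uniform $L^2$ bound on horizontal slices and not just holomorphy (the paper's $\sin(\pi x)/(\pi x)$ remark), a point you gloss over but could repair exactly as in the paper.

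The genuine gap is that the entire analytic core of the theorem is left as a declaration of intent. You correctly identify the obstacle — the Riesz projection is unbounded on $L^1(\R)$, so the $(1,2)$-Carleson hypothesis cannot be applied to $h_+$ directly — but you never say how the Carleson condition is actually converted into the bound ``$f_+$ compensates for the failure of $H^1$-boundedness.'' In the paper this is precisely Proposition \ref{prop-2B-ineq-H}, a weighted Bourgain--Brezis-type inequality on $\R$ with $\mu$-adapted multipliers, and its proof is not a routine transcription of the disk case: after the Cauchy--Schwarz estimates the dangerous bilinear term is
$\int_\R \widehat{g}_1(\xi)\overline{\widehat{g}_2(\xi)}\,e^{-4\pi y|\xi|}\sgn(\xi)\mathcal{L}_\Pi(\xi)\,d\xi$
with $g_1,g_2$ only in $L^1(\R)$, and the Carleson hypothesis enters solely through Lemma \ref{lem-key-for-upper}: the function $W^\Pi$ of \eqref{def-W-pi} is bounded (via Garnett's Lemma \ref{lem-Ramey}) and has distributional Fourier transform $\sgn(\xi)\mathcal{L}_\Pi(\xi)$, so the term is controlled by $\|W^\Pi\|_{L^\infty}\|g_1\|_{L^1}\|g_2\|_{L^1}$; making this pairing legitimate requires the $\varepsilon,R$ truncations of $\Pi$ and the integration-by-parts estimates of Claims B--D, plus the separate truncation $\mu_R$ of the (possibly infinite) measure in the application. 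None of this — not even a precise statement of the inequality you would need, nor the duality object playing the role of $w_\sigma$/$W^\Pi$ — appears in your proposal, and without it the key step is unproven, so the proposal is an outline rather than a proof of the sufficiency direction.
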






\subsection{Sketch of the proof}
Let us give a sketch of the proof of Theorem \ref{thm-disk-stable}.  Our proof of Theorem \ref{thm-disk-stable} is based on a generalization of  the following one-dimensional Bourgain-Brezis-type inequality proved by Da Lio-Rivi\`{e}re-Wettstein:  there exists a universal constant $C>0$ such that for any smooth function $u\in C^\infty(\T)$ with $\int u(e^{i\theta}) d\theta = 0$,
\begin{align}\label{BB-ineq}
\|u\|_{L^2(\mathbb{T})}\leq C\Big(\|(-\Delta)^{1/4}u\|_{H^{-1/2}(\mathbb{T})+L^1(\mathbb{T})}+
\|\mathcal{H}(-\Delta)^{1/4}u\|_{H^{-1/2}
(\mathbb{T})+L^1(\mathbb{T})}\Big),
\end{align}
where the Hilbert transform of $u$ is given by
\[
\mathcal{H}u  (e^{i\theta})= \sum_{n\in \Z }\text{sgn}(n)\widehat{u}(n) e^{in \theta}
\]
and the $1/4$-fractional Laplace transform $(-\Delta)^{1/4}u$ is given by
\[
(-\Delta)^{1/4}u (e^{i\theta})  =  \sum_{n\in \Z} |n|^{1/2}\widehat{u}(n) e^{i n \theta}.
\]
The inequality \eqref{BB-ineq} implies
\begin{align}\label{BHB-ineq}
\|f\|_{B^2(\D)}\leq C(\|f\|_{B^2(\D)+h^1(\D)}+
\|\mathcal{H}f\|_{B^2(\D)+h^1(\D)}), \quad \forall f\in \Harm(\D)\cap L^\infty(\D),
\end{align}
where,  slightly by abusing the notation, $\mathcal{H}f$ is defined by
\begin{align}\label{def-Hil}
\mathcal{H} f (z)  =  \sum_{n\ge 1} a_n z^n - \sum_{n\ge 1} b_n \bar{z}^n, \quad \text{provided that\,\,} f (z)=   \sum_{n\ge 0} a_n z^n + \sum_{n\ge 1} b_n \bar{z}^n
\end{align}

In our situation, we shall show that, if $\mu$ is a radial boundary-touching measure, then  $\mu$ is a $(1,2)$-Carleson measure on $\D$ if and only if there exists a constant $C_\mu>0$ such that  for any bounded harmonic function $f \in \Harm(\D)\cap L^\infty(\D)$,
\begin{align}\label{weight-BHB}
\| f\|_{B^2(\D, \mu)} \le C_\mu(\| f\|_{B^2(\D, \mu)+h^1(\D)} + \| \mathcal{H} f\|_{B^2(\D,\mu)+h^1(\D)}).
\end{align}
 The inequality \eqref{weight-BHB} applied to holomorphic functions immediately gives the result stated in Theorem \ref{thm-disk-stable}.

 The proof of the inequality \eqref{weight-BHB}
relies on a weighted version of Bourgain-Brezis-type inequality obtained in Theorem \ref{thm-BBB-weight} below for a Fourier multiplier operator associated to the moment sequence of the radial weight $\mu$. More precisely, define a Fourier multiplier operator by
\begin{align}\label{def-A-mu}
\mathcal{A}_\mu  u \sim
\sum_{n\in \Z}\Big(\int_{\D}|z|^{2|n|}\mu(dz)\Big)^{-1/2}\widehat{u}(n) e^{in\theta}, \quad u \in C^\infty(\T),
\end{align}
where
\[
\widehat{u}(n): =\frac{1}{2\pi} \int_0^{2\pi} u(e^{i\theta})e^{-in\theta}d\theta, \quad n \in \Z.
\]
Define  also a Sobolev-type space corresponding to the radial weight $\mu$ by
\begin{align}\label{def-H-mu}
H_\mu(\T)  := \Big\{v\sim \sum_{n\in \Z} \widehat{v}(n) e^{i n \theta}\Big|\|v\|_{H_\mu(\T)}=\Big(\sum_{n\in\mathbb{Z}} |\widehat{v}(n)|^2\int_\D |z|^{2|n|} \mu(dz)\Big)^{1/2}<\infty\Big\}.
\end{align}
  For a general $\mu$, the coefficients of  a formal Fourier series $v\in H_\mu(\T)$ may have non-polynomial growth and it may  not represent a distribution in $\mathcal{D}'(\T)$. However, if $\mu$ is the Lebesgue measure on $\D$, then $H_\mu(\T)$ coincides with the Sobolev space $H^{-1/2}(\T) \subset \mathcal{D}'(\T)$.

\begin{theorem}\label{thm-BBB-weight}
Let $\mu$ be a radial boundary-touching finite measure on $\mathbb{D}$. Then $\mu$ is a $(1,2)$-Carleson measure if and only if there exists a universal constant $C_\mu>0$ such that  for any smooth function $u\in C^\infty(\T)$,
\begin{align}\label{A-H-ineq}
\|u\|_{L^2(\mathbb{T})}\leq C_\mu \Big(\|\mathcal{A}_\mu u\|_{H_\mu(\mathbb{T})+L^1(\mathbb{T})}+
\|\mathcal{H} \mathcal{A}_\mu u\|_{H_{\mu}
(\mathbb{T})+L^1(\mathbb{T})}\Big).
\end{align}
\end{theorem}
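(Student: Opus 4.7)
Assume \eqref{A-H-ineq} holds. For any polynomial $f(z)=\sum_{n=0}^N a_n z^n$, set $w_n:=\int_\D |z|^{2n}\,\mu(dz)$ and define the trigonometric polynomial $u(e^{i\theta}):=\sum_{n=0}^N a_n w_n^{1/2}\,e^{in\theta}\in C^\infty(\T)$. Using that $\mu$ is radial, Parseval's identity gives $\|u\|_{L^2(\T)}^2=\sum_n |a_n|^2 w_n=\|f\|_{L^2(\D,\mu)}^2$, while the definitions of $\mathcal{A}_\mu$ and $\mathcal{H}$ show $\mathcal{A}_\mu u=f|_\T$ and $\mathcal{H}\mathcal{A}_\mu u=f|_\T - a_0$. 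Both are $L^1(\T)$-functions with norms bounded by $2\|f\|_{H^1(\D)}$, so plugging them into \eqref{A-H-ineq} and invoking density of polynomials in $H^1(\D)$ yields the $(1,2)$-Carleson inequality \eqref{def-onetwo}.

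\textbf{Sufficiency.} Assume $\mu$ is a $(1,2)$-Carleson measure. The strategy is duality. Introduce the Hilbert space
\[
H_{1/w}(\T):=\Big\{v\sim\sum_n \hat v(n)e^{in\theta}:\,\|v\|_{H_{1/w}}^2:=\sum_n |\hat v(n)|^2 w_{|n|}^{-1}<\infty\Big\},
\]
which is Fourier-dual to $H_\mu(\T)$. One checks from the infimum norm on the sum that $(H_\mu(\T)+L^1(\T))^{\ast}=H_{1/w}(\T)\cap L^\infty(\T)$ with the maximum of the two norms. By the Hahn--Banach theorem, the inequality \eqref{A-H-ineq} is equivalent to the following decomposition lemma: every $q\in H_{1/w}(\T)$ admits a representation
\[
q=\phi_1+\mathcal{H}\phi_2,\quad \phi_j\in H_{1/w}\cap L^\infty,\quad \max\bigl\{\|\phi_j\|_{H_{1/w}},\,\|\phi_j\|_{L^\infty}\bigr\}\le C_\mu\|q\|_{H_{1/w}}.
\]

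\textbf{The decomposition lemma.} Dualizing the Carleson inequality against Taylor coefficients of $H^1(\D)$-functions, via $(H^1(\D))^{\ast}=BMOA(\T)$, shows that the radial $(1,2)$-Carleson hypothesis is equivalent to the continuous embedding $H_{1/w}(\T)\hookrightarrow BMO(\T)$ (with $BMO=BMOA+\overline{BMOA}$ modulo constants, which is available since the weight is radial). Combined with Fefferman's classical identity $BMO(\T)=L^\infty(\T)+\mathcal{H}L^\infty(\T)$, this produces a preliminary splitting $q=\tilde\phi_1+\mathcal{H}\tilde\phi_2$ with $\tilde\phi_j\in L^\infty$ and $\|\tilde\phi_j\|_{L^\infty}\le C\|q\|_{H_{1/w}}$. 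To upgrade this to $\phi_j\in H_{1/w}\cap L^\infty$, I would perform a Littlewood--Paley/Bourgain--Brezis refinement: decompose $q$ dyadically in frequency, apply the $L^\infty$-splitting scale by scale, and sum with geometric bounds that simultaneously control both the $L^\infty$ and the $H_{1/w}$ norms of the pieces.

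\textbf{Main obstacle.} The technical heart of the argument is the simultaneous control of $\|\phi_j\|_{L^\infty}$ and $\|\phi_j\|_{H_{1/w}}$ in the Littlewood--Paley refinement of Fefferman's decomposition; in the unweighted Lebesgue case this is the content of the 1D Bourgain--Brezis argument behind \eqref{BB-ineq}. For general radial $(1,2)$-Carleson $\mu$ the weight sequence $w_n$ may decay at varying rates, so the dyadic summation requires careful balancing to prevent the $L^\infty$ norm from accumulating across scales while keeping the $H_{1/w}$ bound, and it is precisely the quantitative form of the embedding $H_{1/w}\hookrightarrow BMO$ furnished by the Carleson hypothesis that makes the scale-by-scale estimates close.
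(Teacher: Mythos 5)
Your necessity direction is fine and essentially the paper's own argument (test \eqref{A-H-ineq} on $u=\mathcal{A}_\mu^{-1}(f|_\T)$ for holomorphic $f$ and invoke density/Fatou). The problem is the sufficiency direction: after the duality reformulation, everything is made to rest on the decomposition lemma that every $q\in H_{1/w}(\T)$ can be written as $q=\phi_1+\mathcal{H}\phi_2$ with \emph{simultaneous} control of $\|\phi_j\|_{L^\infty}$ and $\|\phi_j\|_{H_{1/w}}$ — and this lemma is never proved. The Fefferman splitting $BMO=L^\infty+\mathcal{H}L^\infty$ only gives the $L^\infty$ bounds; the upgrade to $H_{1/w}\cap L^\infty$ is exactly the dual form of the inequality you are trying to prove (in the unweighted case it is the 1D Bourgain--Brezis theorem itself), so as written the argument is circular at its core: you have reduced the statement to an equivalent, equally hard statement and then described a plan ("decompose dyadically, apply the splitting scale by scale, sum with geometric bounds") without carrying out any estimate. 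You yourself flag this as the "main obstacle", and it is not a routine obstacle: for a general radial $(1,2)$-Carleson $\mu$ the weights $\sigma_n$ need not have any dyadic regularity (e.g.\ $\sigma$ a sum of point masses accumulating at $1$), so it is not clear the scale-by-scale $L^\infty$-splittings can be summed while retaining the $H_{1/w}$ bound. There are also smaller unaddressed points in the duality setup — $H_\mu(\T)$ need not consist of distributions, so the identification $(H_\mu(\T)+L^1(\T))^*=H_{1/w}(\T)\cap L^\infty(\T)$ and the Hahn--Banach equivalence between \eqref{A-H-ineq} and the decomposition need justification in a common ambient space of formal Fourier series — but these are repairable; the missing decomposition lemma is not.

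For comparison, the paper never dualizes and never invokes $BMO$ or Littlewood--Paley theory. It proves \eqref{A-H-ineq} directly (Proposition \ref{thm:disk-2B}): take arbitrary decompositions $\mathcal{T}_a u=f_1+g_1$, $\mathcal{T}_b\mathcal{T}_a u=f_2+g_2$, expand $\|P_r^\D*u\|_{L^2(\T)}^2$, estimate the cross terms by Cauchy--Schwarz, and handle the only dangerous term $\sum_n \sgn(n)\sigma_n\, r^{2|n|}\widehat{g}_1(n)\overline{\widehat{g}_2(n)}$ by the single key fact (Lemma \ref{lem-bdd-w}) that the Carleson condition forces the sequence $\sgn(n)\sigma_n$ to be the Fourier coefficients of a function $w_\sigma\in L^\infty(\T)$; this is where the hypothesis enters, via the elementary Poisson-integral criterion of Lemma \ref{prop:bergman-carleson}. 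In other words, the quantitative content you hope to extract from a weighted Bourgain--Brezis-type dyadic argument is replaced in the paper by one concrete $L^\infty$ bound on an explicit conjugate-Poisson-type integral, which is why no multi-scale balancing is needed. If you want to salvage your route, you would need to actually prove the simultaneous-norm decomposition — and the shortest way to do so is, in effect, the paper's primal estimate.
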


\section{Preliminaries on Carleson measures}
Recall that throughout the paper, all measures are assumed to be positive measures. We shall use the famous geometric characterization of  the $(1,2)$-Carleson measures on $\D$ defined in \eqref{def-onetwo}.
For any interval $I\subset\T$, the Carleson box $S_I$ is defined by
\[
S_I=\Big\{z\in\mathbb{D}\Big|\frac{z}{|z|}\in I, 1-\frac{|I|}{2\pi} \leq |z|<1\Big\},
\]
where $|I|$ denotes the arc-length of  $I$. Let $|S_I|$ denote the Lebesgue measure of $S_I$, then a measure  $\mu$ on $\D$ is a $(1,2)$-Carleson measure  if and only if  (see \cite{carleson-1962} and  \cite{duren-1969})
\begin{align}\label{ch-carleson}
\sup_{\text{$I$ is an arc in $\T$}}\frac{\mu(S_I)}{|S_I|}<\infty.
\end{align}
In particular, when $\mu$ is a radial measure on $\D$,  the condition \eqref{ch-carleson}  is  reduced to the following simpler one which can be verified directly.

\begin{lemma}\label{lem-radial-Carleson}
Let $\mu(dz) = \sigma(dr)d\theta$ be a radial measure on $\D$. Then $\mu$ is a $(1,2)$-Carleson measure   if and only if
\begin{align}\label{E:C-B}
\sup_{0<\delta<1} \frac{\sigma([1-\delta,1))}{\delta}<\infty.
\end{align}
\end{lemma}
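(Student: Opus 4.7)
The plan is to reduce the geometric Carleson condition $\sup_I \mu(S_I)/|S_I| < \infty$ to a one-parameter condition by exploiting the rotational invariance of $\mu$. First, I would observe that since $\mu(dz) = \sigma(dr)\,d\theta$ is invariant under rotations $z \mapsto e^{it}z$, and since the Carleson box $S_{e^{it}I}$ is the rotation of $S_I$ by angle $t$, the quantity $\mu(S_I)$ depends only on the arc length $|I|$, not on the position of $I$ on $\T$. Hence it suffices to consider, for each $\delta \in (0,1)$, a single representative arc $I_\delta$ of length $2\pi\delta$.

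Next I would compute both $\mu(S_{I_\delta})$ and $|S_{I_\delta}|$ directly in polar coordinates. Using the product structure of $\mu$,
\[
\mu(S_{I_\delta}) \;=\; \int_{I_\delta} d\theta \int_{1-\delta}^{1} \sigma(dr) \;=\; 2\pi\delta\cdot\sigma([1-\delta,1)),
\]
while the Lebesgue measure of the Carleson box is
\[
|S_{I_\delta}| \;=\; \int_{I_\delta}d\theta\int_{1-\delta}^{1} r\,dr \;=\; \pi\delta^{2}(2-\delta).
\]
In particular $|S_{I_\delta}| \asymp \delta^2$ uniformly for $\delta\in(0,1)$, and therefore
\[
\frac{\mu(S_{I_\delta})}{|S_{I_\delta}|} \;\asymp\; \frac{\sigma([1-\delta,1))}{\delta}.
\]
Combined with the previous paragraph, taking the supremum in $\delta$ on both sides yields the claimed equivalence with the geometric Carleson characterization, which in turn is equivalent to $\mu$ being a $(1,2)$-Carleson measure.

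There is no substantive obstacle here: once rotational invariance is used to collapse the supremum over arcs to a supremum over the single parameter $\delta$, the verification is a direct computation in polar coordinates, and the only point to be mildly careful about is the two-sided estimate $|S_{I_\delta}|\asymp\delta^2$, which is immediate from $1\le 2-\delta\le 2$.
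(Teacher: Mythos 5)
Your argument is correct and is essentially the one the paper intends: Lemma \ref{lem-radial-Carleson} is stated there as an immediate consequence of the geometric box characterization $\sup_I \mu(S_I)/|S_I|<\infty$, and your use of rotation invariance plus the polar-coordinate computation $\mu(S_{I_\delta})=2\pi\delta\,\sigma([1-\delta,1))$, $|S_{I_\delta}|=\pi\delta^2(2-\delta)\asymp\delta^2$ is exactly the verification left implicit. No issues.
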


The $(1,2)$-Carleson measures on the upper half plane $\hh$ is defined in \eqref{def-onetwoup} and its geometric characterization  (see, e.g., \cite[Thm. 2.1]{Rydhe}) is given as follows:  a positive  Radon measure $\mu$ on $\hh$ is a $(1,2)$-Carleson measure on $\hh$ if and only if
\begin{align}\label{ch-carleson-up}
\sup_{\text{$I$ is an interval in $\R$}}\frac{\mu(Q_I)}{|Q_I|}<\infty,
\end{align}
where $|Q_I|$ is the Lebesgue measure of the Carleson box $Q_I$   defined by
\[
Q_I=\Big\{z=x+iy\in \hh \Big| x \in I, 0 <y<|I|\Big\},
\]
here $|I|$ denotes the Lebesgue measure of the interval $I\subset \R$. In particular, we have
\begin{lemma}\label{lem-carleson-up}
Let $\mu(dz)= dx\Pi(dy)$ be a horizontal translation-invariant  measure on $\hh$. Then $\mu$ is a $(1,2)$-Carleson measure if and only if
\begin{align}\label{good-Pi}
\sup_{y>0} \frac{\Pi((0, y])}{y}<\infty.
\end{align}
\end{lemma}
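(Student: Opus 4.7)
The plan is to deduce the equivalence directly from the general geometric characterization of $(1,2)$-Carleson measures on $\hh$ recalled immediately above the lemma, namely that $\mu$ is $(1,2)$-Carleson if and only if
\[
\sup_{I} \frac{\mu(Q_I)}{|Q_I|}<\infty,
\]
where $I$ ranges over bounded intervals of $\R$ and $Q_I = \{x+iy \in \hh : x\in I,\ 0<y<|I|\}$. The product structure $\mu(dz) = dx\,\Pi(dy)$ should reduce this two-parameter geometric condition on boxes to a one-parameter condition on $\Pi$.

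The key computational step is to invoke Fubini together with the horizontal translation-invariance of $\mu$, which yield
\[
\mu(Q_I) \;=\; \int_{I} dx \int_{(0,\,|I|)}\Pi(dy) \;=\; |I|\,\Pi((0, |I|)),
\]
while $|Q_I| = |I|^2$. Therefore $\mu(Q_I)/|Q_I|$ depends only on $|I|$ and equals $\Pi((0,|I|))/|I|$.

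Next I would observe that as $I$ varies over all bounded intervals of $\R$, the length $|I|$ takes every value in $(0,\infty)$, so the supremum over $I$ on the geometric side coincides with $\sup_{y>0}\Pi((0,y))/y$. It remains to match $\Pi((0,y))$ with the closed-interval quantity $\Pi((0, y])$ appearing in \eqref{good-Pi}. These two suprema agree by a standard monotonicity argument: clearly $\Pi((0,y))\le \Pi((0,y])$, and for each $y_0$ one has $\Pi((0, y_0])/y_0 = \lim_{\varepsilon\to 0^+}\Pi((0, y_0+\varepsilon))/y_0 \le \limsup_{y\to y_0^+}\Pi((0,y))/y$, so the two suprema coincide. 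Combining these observations with the cited geometric characterization yields the claimed equivalence.

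The argument is essentially a bookkeeping consequence of the general Carleson criterion from \cite{Rydhe} together with the product form of $\mu$; no step presents a genuine obstacle. The only mildly delicate point is the passage between $\Pi((0,y))$ and $\Pi((0,y])$, which is handled by the right-limit computation above.
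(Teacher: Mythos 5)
Your proof is correct and follows exactly the route the paper intends: the lemma is stated as an immediate consequence of the geometric characterization $\sup_I \mu(Q_I)/|Q_I|<\infty$, and your computation $\mu(Q_I)=|I|\,\Pi((0,|I|))$, $|Q_I|=|I|^2$, together with the routine passage between $\Pi((0,y))$ and $\Pi((0,y])$, is precisely the bookkeeping the paper leaves to the reader.
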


\section{The disk case}

This section is mainly devoted to proving Theorem \ref{thm-disk-stable} and Corollary \ref{prop-complement}. We shall use the following elementary observation: if $\mu(dz)= \sigma(dr)d\theta$ is a radial boundary-touching finite measure on $\D$, then
\begin{itemize}
\item  both  $A^2(\D,\mu)$ and  $B^2(\D,\mu)$ are closed in $L^2(\D,\mu)$;
\item for any $z\in \D$, the evaluation map $\mathrm{ev}_z:  B^2(\D,\mu)+h^1(\D) \longrightarrow \C$ defined by
\begin{align}\label{def-ev}
\mathrm{ev}_z(f)=  f(z)
\end{align}
is a continuous linear functional on $B^2(\D,\mu)+h^1(\D)$;
\item for any $\rho \in (0,1)$,
\begin{align}\label{low-sigma}
\sigma_k= \int_0^1 r^{2k} \sigma(dr)  \ge\int_{[\rho, 1)} r^{2k} \sigma(dr) \ge  \rho^{2k} \sigma([\rho, 1)).
\end{align}
\end{itemize}

Recall the definition \eqref{def-H-mu} of the space $H_\mu(\T)$: for any $v\in H_\mu(\T)$, we set
\begin{align}\label{Hmu-sum}
\|v\|_{H_\mu(\T)}^2 = \sum_{n\in\mathbb{Z}} |\widehat{v}(n)|^2\int_\D |z|^{2|n|} \mu(dz) =  2 \pi \sum_{n\in\mathbb{Z}} |\widehat{v}(n)|^2 \sigma_n.
\end{align}
By \eqref{low-sigma} and \eqref{Hmu-sum},  for any $r \in [0,1)$, the Poisson transformation $P_r^{\D}*v$ of $v\in H_\mu(\T)$ is a smooth function  given by
\[
P_r^{\D}*v (e^{i\theta}) = \sum_{n\in \Z} r^{|n|}  \widehat{v}(n) e^{in\theta}.
\]
In particular, any $v\in H_\mu(\T)$ has a natural harmonic extension (denoted again by $v$) on $\D$:
\begin{align}\label{v-harm-ext}
v(z) = \sum_{n\in \Z} \widehat{v}(n) e_n(z) \quad \text{with}\quad
e_n(z)= \left\{
\begin{array}{cc}
z^n & \text{if $n\ge 0$}
\vspace{2mm}
\\
\bar{z}^{|n|} & \text{if $n\le -1$}
\end{array}
\right.
\end{align}

\subsection{The derivation of Theorem  \ref{thm-disk-stable} from Theorem \ref{thm-BBB-weight}}\label{sec-15-11}
We begin with the following simple observation which will be useful later.
\begin{lemma}\label{lem-hat-less-sum}
Suppose that $\mu$ is a radial boundary-touching $(1,2)$-Carleson measure on $\D$, then there exists a constant $C_\mu$ such that for any $f\in C^\infty(\T)$, one has
\begin{align}
|\widehat{f}(0)|\leq C_\mu \left\|f\right\|_{H_\mu(\T)+L^1(\T)}.
\end{align}
\end{lemma}
\begin{proof}
For any $f\in C^\infty(\T)$, write $f=g+h$ with $g\in H_\mu(\T)$ and $h\in L^1(\T)$. By the definition \eqref{def-H-mu}, one has
\begin{align*}
|\widehat{g}(0)|\leq \frac{\|g\|_{H_\mu(\T)}}{\sqrt{\mu(\D)}}=C_\mu \|g\|_{H_\mu(\T)}
\end{align*}
and
\begin{align*}
|\widehat{h}(0)|=\left|\frac{1}{2\pi}\int_{0}^{2\pi} h(e^{i\theta})d\theta\right|\leq \frac{1}{2\pi} \int_{0}^{2\pi} \left|h(e^{i\theta})\right|d\theta=\left\|h\right\|_{L^1(\T)}.
\end{align*}
Hence
\[
|\widehat{f}(0)|\leq |\widehat{g}(0)|+|\widehat{h}(0)|\leq \left(1+C_\mu\right)\left(\|g\|_{H_\mu(\T)}+\left\|h\right\|_{L^1(\T)}\right).
\]
Since the decomposition $f=g+h$ is arbitrary, one gets
\[
|\widehat{f}(0)|\leq (1+C_\mu)\left\|f\right\|_{H_\mu(\T)+L^1(\T)}.
\]
This completes the proof.
\end{proof}

 \begin{proof}[Proof of Theorem  \ref{thm-disk-stable}]
If $(B^2(\D,\mu) + h^1(\D)) \cap \Hol(\D)= A^2(\D, \mu)$, then we have set-theoretical inclusion  \[
H^1(\D) \subset A^2(\D,\mu).
\]
It follows that $\mu$ is a finite measure, which combined with the assumption of the theorem, implies that  $\mu$ is a  radial boundary-touching finite  measure on $\D$. Therefore, $A^2(\D, \mu)$ is complete. Hence, by the Closed Graph Theorem,  the embedding $H^1(\D)\subset A^2(\D,\mu)$ is continuous. In other words,  $\mu$ is a $(1,2)$-Carleson measure on $\D$.

  Now assume that  $\mu$ is a radial boundary-touching $(1,2)$-Carleson measure $\mu$ on $\D$.  To prove the equality $(B^2(\D,\mu) + h^1(\D)) \cap \Hol(\D)= A^2(\D, \mu)$, it suffices to show that there exists a constant $C>0$ such that
\begin{align}\label{A-mu-BH}
\| f \|_{A^2(\D,\mu)}\le C \| f\|_{B^2(\D,\mu)+h^1(\D)}, \quad \forall f \in \Hol(\D).
\end{align}
Indeed, for any $f\in \Hol(\D)$,  write $f_r(z): = f(rz)$ for all $0<r<1$, since  $\mu$ is radial,
\[
\lim_{r\to 1^{-}} \| f_r\|_{A^2(\D,\mu)} = \| f\|_{A^2(\D,\mu)}.
\]
Clearly, by the contractivity of the Poisson convolution on both $B^2(\D,\mu)$ and $h^1(\D)$,
\[
\limsup_{r\to 1^{-}} \| f_r\|_{B^2(\D,\mu) + h^1(\D)} \le \| f\|_{B^2(\D,\mu) + h^1(\D)}.
\]
Therefore, it suffices to show that \eqref{A-mu-BH} holds for all  $f$ belonging to the following class:
\[
\Hol(\overline{\D}) = \{f|\text{$f$ is holomorphic in a neighborhood of $\overline{\D}$}\}.
\]
We now proceed  to the  derivation of the inequality \eqref{A-mu-BH} for
 any $f\in \Hol(\overline{\D})$ from the inequality  \eqref{A-H-ineq} obtained in Theorem \ref{thm-BBB-weight}. Notice that any $v\in \Harm(\overline{\D})$ can be written as
\[
v(z) = \sum_{n\in \Z} a_n e_n(z),\quad z\in\overline{\D},
\]
where $e_n$ is defined in \eqref{v-harm-ext}.
Then the restriction of $v$ on $\T$ is
\[
v|_\mathbb{T}(e^{i\theta})=\sum_{n\in \Z} a_n e^{in\theta}.
\]
By the radial assumption on $\mu$ and the definition \eqref{def-H-mu},
\begin{align}\label{B-norm}
\| v\|_{B^2(\D,\mu)}^2 = \sum_{n\in \Z}|a_n|^2\int_\D |z|^{2|n|} \mu(dz)=\|v|_\mathbb{T}\|_{H_\mu(\T)}^2.
\end{align}
For any $f\in \Hol(\overline{\D})$, consider its decomposition on the unit disk $\mathbb{D}$:
\[
f(z)=g(z)+h(z), \quad z\in\D,
\]
with $g\in B^2(\mathbb{D},\mu)$ and $h\in h^1(\mathbb{D})$.
Then, it follows that for any $r\in(0,1)$, 
$$
f_r(z)=g_r(z)+h_r(z),\quad z\in\mathbb{D}.
$$
Notice that  $f_r, g_r, h_r\in \Harm(\overline{\D})$,  hence their restrictions on the unit circle are well-defined (even continuous) and we denote:
\[
\widetilde{f}_r=f_r|_\mathbb{T},\quad  \widetilde{g}_r=g_r|_\mathbb{T},\quad
\widetilde{h}_r=h_r|_\mathbb{T}.
\]
Therefore,
\[
\widetilde{f}_r(e^{i\theta})=\widetilde{g}_r(e^{i\theta})+\widetilde{h}_r(e^{i\theta}),\quad e^{i\theta}\in \T.
\]
Since $g_r\in \Harm(\overline{\D})$,  we obtain
$\widetilde{g}_r\in H_\mu(\mathbb{T})$ and
\[
\|\widetilde{g}_r\|_{H_\mu(\mathbb{T})}=\|g_r\|_{B^2(\mathbb{D},\mu)}\leq
\|g\|_{B^2(\mathbb{D},\mu)}.
\]
Combining with the following standard fact
\[
\|\widetilde{h}_r\|_{L^1(\T)}=\frac{1}{2\pi}\int_0^{2\pi}
|\widetilde{h}_r(e^{i\theta})|d\theta=
\frac{1}{2\pi}\int_0^{2\pi}
|h(re^{i\theta})|d\theta\leq \|h\|_{h^1(\D)},
\]
we get
\begin{align}\label{eqn-hat-f-r}
\|\widetilde{f}_r\|_{H_\mu(\T)+L^1(\T)}\leq \|\widetilde{g}_r\|_{H_\mu(\mathbb{T})}+\|\widetilde{h}_r\|_{L^1(\T)}\leq \|g\|_{B^2(\mathbb{D},\mu)}+\|h\|_{h^1(\D)}.
\end{align}
Now by taking $u = \widetilde{f}_r \in C^\infty(\T)$ in \eqref{A-H-ineq}, we can conclude that there is a constant $C_\mu$ depending only on the measure $\mu$ (in the following $C_\mu$ will denote a constant depending on $\mu$ which can be different in different places)  such that for all $r\in(0,1)$ and $f\in \Hol(\overline{\D})$, the following inequality holds:
\begin{align}\label{eqn-A-hat-fr}
\|\mathcal{A}_\mu^{-1}(\widetilde{f}_r)\|_{L^2(\T)}\le C_\mu (\| \widetilde{f}_r\|_{H_\mu(\mathbb{T})+L^1(\mathbb{T})}+
\|\mathcal{H}\widetilde{f}_r \|_{H_\mu (\mathbb{T})+L^1(\T)}),
\end{align}
with $\mathcal{A}_\mu$  defined in \eqref{def-A-mu} and  $\mathcal{H}f$ defined in \eqref{def-Hil}.
Since $f\in \Harm(\overline{\D})$, we have
\[
\mathcal{H}\widetilde{f}_r=\widetilde{f}_r-f_r(0)=\widetilde{f}_r-f(0).
\]
By Lemma \ref{lem-hat-less-sum}, we have
\[
|f(0)|=|\widehat{\widetilde{f}}_r(0)|\leq C_\mu \| \widetilde{f}_r\|_{H_\mu(\mathbb{T})+L^1(\mathbb{T})}.
\]
Hence by  \eqref{eqn-hat-f-r} and \eqref{eqn-A-hat-fr}, 
\[
\|\mathcal{A}_\mu^{-1}(\widetilde{f}_r)\|_{L^2(\T)}\le  C_\mu  \| \widetilde{f}_r\|_{H_\mu(\T)+L^1(\T)}\leq C_\mu\left( \|g\|_{B^2(\mathbb{D},\mu)}+\|h\|_{h^1(\D)}\right).
\]
In other words,   the following inequality holds (we emphasize that the term on the right-hand side is independent of the choice $r$):
\[
\|\mathcal{A}_\mu^{-1}(\widetilde{f}_r)\|_{L^2(\T)}\le  C_\mu
\left(\|g\|_{B^2(\mathbb{D},\mu)}+\|h\|_{h^1(\D)}\right).
\]
For any $f = \sum_{n\ge 0} c_nz^n\in \Hol(\overline{\D})$,  we have 
$
\widetilde{f}_r(e^{i\theta})= \sum_{n\ge 0} c_n r^ne^{in\theta}
$
and therefore
\begin{align*}
\|\mathcal{A}_\mu^{-1}(\widetilde{f}_r)\|_{L^2(\T)}^2 = \sum_{n\ge 0} |c_n|^2 r^{2n}\int_{\D}|z|^{2|n|}\mu(dz).
\end{align*}
It follows that
\begin{align}
\|f\|_{A^2(\D,\mu)}=\lim_{r\to 1^- } \bigg[\sum_{n\ge 0} |c_n|^2 r^{2n}\int_{\D}|z|^{2|n|}\mu(dz)\bigg]^{1/2}\leq
  C_\mu
\left(\|g\|_{B^2(\mathbb{D},\mu)}+\|h\|_{h^1(\D)}\right).
\end{align}
Since the decomposition  $f=g+h$ with $g\in B^2(\mathbb{D},\mu),h \in h^1(\mathbb{D})$ is arbitrary, we have
\[
\|f\|_{A^2(\D,\mu)}\leq C_\mu\|f\|_{B^2(\mathbb{D},\mu)+h^1(\mathbb{D})}.
\]

Thus we obtain the desired inequality  \eqref{A-mu-BH} for all $f \in \Hol(\overline{\D})$  and complete the derivation of Theorem  \ref{thm-disk-stable} from Theorem \ref{thm-BBB-weight}.
\end{proof}

\subsection{The proof of Theorem \ref{thm-BBB-weight} }
 We say that  a pair $(a, b)$ of sequences $a=(a(n))_{n\in\mathbb{Z}}$ and $b=(b(n))_{n\in\mathbb{Z}}$  is $\mu$-adapted if  the following conditions are satisfied:
\begin{itemize}
\item[(i)] $a(0)\ne 0$;
\item[(ii)] for any $n\in\mathbb{Z}^* = \Z\setminus \{0\}$,
\begin{align}\label{ab-cond}
|a(n)|^2 b(n) \sgn(n) =  \sigma_n^{-1}, \quad \text{where\,\,} \sigma_n: = \int_0^1 r^{2|n|}\sigma(dr)>0;
\end{align}
\item[(iii)] there exists a constant $C_b$ such that
\begin{align}\label{def-Cb}
0<\frac{1}{C_b}\leq |b(n)|\leq C_b,\quad n\in\mathbb{Z}^*.
\end{align}
\end{itemize}
Note that the condition \eqref{ab-cond} implies in particular that $b(n)\in \R$ for all $n\in \Z^*$.

\begin{proposition}\label{thm:disk-2B}
Suppose $\mu$ is a radial boundary-touching $(1,2)$-Carleson measure on $\mathbb{D}$. Let $(a,b)$ be a $\mu$-adapted pair of sequences. Then there exists a constant $C$ such that
\begin{align}\label{2B-ineq-goal}
\|u\|_{L^2(\mathbb{T})}\leq C(\|\mathcal{T}_a u\|_{H_\mu(\mathbb{T})+L^1(\mathbb{T})}
+\|\mathcal{T}_b\mathcal{T}_au\|_{H_\mu(\mathbb{T})+L^1(\T)}), \quad \forall u\in C^\infty(\mathbb{T}),
\end{align}
where $\mathcal{T}_a$ and $\mathcal{T}_b$ are the Fourier multipliers   defined by
\[
\widehat{\mathcal{T}_a u}(n)=a(n) \widehat{u}(n) \an
\widehat{\mathcal{T}_b u}(n)=b(n)\widehat{u}(n),\quad n\in\mathbb{Z}.
\]
\end{proposition}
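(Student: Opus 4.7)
The plan is to reduce \eqref{2B-ineq-goal} by a Fourier-multiplier substitution to a weighted Bourgain-Brezis-type inequality on $H_\mu(\T)$, and then to establish that weighted inequality by adapting the constructive argument underlying the classical inequality \eqref{BB-ineq} to the $(1,2)$-Carleson setting. Setting $v := \mathcal{T}_a u$, the $\mu$-adaptedness identity \eqref{ab-cond} forces $b(n)\sgn n > 0$, hence $|a(n)|^{-2} = \sigma_n|b(n)|$ for $n\ne 0$; combining this with the two-sided bound \eqref{def-Cb} (and absorbing $a(0)$ into the constants) gives
\[
\|u\|_{L^2(\T)}^2 = \sum_{n\in\Z} |a(n)|^{-2}|\widehat v(n)|^2 \asymp \sum_{n\in\Z}\sigma_n|\widehat v(n)|^2 \asymp \|v\|_{H_\mu(\T)}^2.
\]
Consequently \eqref{2B-ineq-goal} is equivalent to the weighted Bourgain-Brezis-type inequality
\[
\|v\|_{H_\mu(\T)} \,\le\, C\bigl(\|v\|_{H_\mu(\T)+L^1(\T)} + \|\mathcal{T}_b v\|_{H_\mu(\T)+L^1(\T)}\bigr), \quad v \in C^\infty(\T).
\]

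Next I would extract the key decay estimate $\sigma_n \lesssim 1/|n|$ from the $(1,2)$-Carleson hypothesis. Lemma~\ref{lem-radial-Carleson} gives $\sigma([1-\delta,1)) \le C\delta$, and together with the layer-cake identity $\sigma_n = \int_0^1 \sigma([s^{1/(2|n|)},1))\,ds$ this yields $\sigma_n \le C/(2|n|+1)$. In particular $H^{-1/2}(\T)\hookrightarrow H_\mu(\T)$ continuously, and on each Littlewood-Paley dyadic band $\{2^{k-1}\le|n|<2^k\}$ the weight $\sigma_n$ is essentially constant up to the Carleson constant of $\mu$. For Lebesgue $\mu$ and $b(n) = \sgn n$ one has $H_\mu(\T) \simeq H^{-1/2}(\T)$ and the reduced inequality above is precisely \eqref{BB-ineq}. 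For general radial $(1,2)$-Carleson $\mu$ and $b$ with $b(n)\asymp \sgn n$, I would adapt the constructive proof of \eqref{BB-ineq} band by band: on each Littlewood-Paley band $\mathcal{T}_b$ differs from $\mathcal{H}$ only by a bounded scalar factor and the frequency-localised $H_\mu$-norm is a renormalised $H^{-1/2}$-norm, so the band-limited version of the classical construction applies. The $H_\mu(\T)$ contributions sum by orthogonality of the bands.

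The main obstacle is the $L^1(\T)$-reassembly: the transfer multiplier $(|n|\sigma_n)^{1/2}$ that identifies $H_\mu(\T)$ with $H^{-1/2}(\T)$ at the Hilbert-space level is bounded, but general bounded multipliers do not act boundedly on $L^1(\T)$, so one cannot deduce the weighted inequality globally from \eqref{BB-ineq} via a Fourier change of variable. Instead the $(1,2)$-Carleson hypothesis must be exploited structurally at the $L^1$ level: it provides the atomic/geometric control on the $L^1$ components of the dyadic decompositions needed to glue them back together without norm inflation. This gluing is the most delicate point of the argument and is the reason the Carleson hypothesis is both sufficient and necessary in the statement.
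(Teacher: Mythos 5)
Your reduction $v=\mathcal{T}_a u$, $\|u\|_{L^2(\T)}\asymp\|v\|_{H_\mu(\T)}$ (via \eqref{ab-cond}--\eqref{def-Cb}), is fine, and your layer-cake estimate $\sigma_n\lesssim 1/|n|$ is correct (it is in fact equivalent to the Carleson condition for radial $\mu$). But the heart of the proposition --- proving the bound for \emph{arbitrary} decompositions $\mathcal{T}_a u=f_1+g_1$, $\mathcal{T}_b\mathcal{T}_a u=f_2+g_2$ with $f_i\in H_\mu(\T)$, $g_i\in L^1(\T)$ --- is exactly the step you leave open: you propose a Littlewood--Paley band-by-band adaptation of the constructive proof of \eqref{BB-ineq} and then concede that the $L^1$-reassembly is ``the most delicate point'' without giving an argument. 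That is a genuine gap, and the band-by-band scheme cannot be repaired as stated: the given decompositions need not respect frequency bands; dyadic frequency projections of an $L^1(\T)$ function are not uniformly in $L^1(\T)$ (such projections are unbounded on $L^1$), so cutting $g_1,g_2$ into bands destroys the only norm you control on them; and within a band $\mathcal{T}_b$ differs from $\mathcal{H}$ by a \emph{varying} bounded factor $|b(n)|$, not a scalar, and bounded multipliers do not act on $L^1(\T)$ --- the very obstruction you identify.

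What is missing is a mechanism for the $g_1$--$g_2$ interaction, and the paper supplies it by duality rather than by frequency localization. One expands $\|P_r^{\D}*u\|_{L^2(\T)}^2$ bilinearly using the two decompositions, controls the terms involving $f_1,f_2$ by Cauchy--Schwarz exactly as you would, and then observes that by \eqref{ab-cond} the remaining term is $\sum_{n\neq 0}\sgn(n)\sigma_n\, r^{2|n|}\widehat g_1(n)\overline{\widehat g_2(n)}$, i.e.\ the pairing of $h_r=(P_r^{\D}*g_1)*(P_r^{\D}*\overline{\widetilde g_2})\in L^1(\T)$ against a fixed function $w_\sigma$ with $\widehat w_\sigma(n)=\sgn(n)\sigma_n$. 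The whole force of the $(1,2)$-Carleson hypothesis is used to show $w_\sigma\in L^\infty(\T)$ (Lemma \ref{lem-bdd-w}, proved via Lemma \ref{prop:bergman-carleson} and Garnett's lemma on Poisson integrals; alternatively one could invoke the classical theory of sine series with monotone coefficients, since $\sigma_n$ decreases and $n\sigma_n$ is bounded), after which the critical term is simply bounded by $2\|g_1\|_{L^1(\T)}\|g_2\|_{L^1(\T)}\|w_\sigma\|_{L^\infty(\T)}$, and one lets $r\to 1$. Some concrete substitute of this kind is needed where you write ``atomic/geometric control on the $L^1$ components''; without it your outline does not yield \eqref{2B-ineq-goal}. (A minor further point: after the substitution $v=\mathcal{T}_a u$, the reduced inequality should be formulated for formal series in the range of $\mathcal{T}_a$, which need not be smooth functions, rather than for all $v\in C^\infty(\T)$.)
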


The next criterion of radial $(1,2)$-Carleson measures will be useful for us.

\begin{lemma}\label{prop:bergman-carleson}
Let $\alpha (dr)$ be a finite measure on $[0,1)$.  Then the inequality
\begin{align}\label{sup-theta}
\sup_{\theta\in [0,2\pi)} \Big|\int_0^1 \frac{\sin\theta}{(r-\cos\theta)^2+\sin^2\theta} \alpha(dr)\Big|<\infty
\end{align}
holds if and only if
\[
\sup_{0<\delta<1} \frac{\alpha([1-\delta, 1))}{\delta}<\infty.
\]
\end{lemma}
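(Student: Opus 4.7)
\medskip

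\noindent\textbf{Proof proposal.} The plan is to reduce the integral to a Poisson-like expression and then estimate it by dyadic decomposition near the boundary point $r = 1$. First I would use the identity
\[
(r-\cos\theta)^2 + \sin^2\theta = |r - e^{i\theta}|^2 = (1-r)^2 + 4r\sin^2(\theta/2),
\]
and observe that $I(\theta):= \int_0^1 \frac{\sin\theta}{|r-e^{i\theta}|^2}\alpha(dr)$ is odd under $\theta\mapsto 2\pi-\theta$, so it suffices to bound $|I(\theta)|$ for $\theta\in(0,\pi]$. For $\theta\in[\pi/2,\pi]$ the denominator satisfies $|r-e^{i\theta}|^2 \geq 1$, hence $|I(\theta)|\leq \alpha([0,1))<\infty$, which is harmless on both sides. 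Thus everything reduces to the regime $\theta\in(0,\pi/2)$, where $|r-e^{i\theta}|^2 \asymp (1-r)^2 + \theta^2$ uniformly for $r\in[1/2,1)$, while for $r<1/2$ the denominator is bounded below by a positive constant and contributes a uniformly bounded piece (since $\alpha$ is finite).

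For the sufficiency direction, assume $M:=\sup_{0<\delta<1} \alpha([1-\delta,1))/\delta<\infty$. Fix $\theta\in(0,\pi/2)$ and split
\[
[0,1) = [0, 1-\theta)\cup[1-\theta,1),
\]
decomposing the left piece dyadically as $[0,1-\theta) = \bigcup_{k\geq 0} A_k$ with $A_k := [1-2^{k+1}\theta, 1-2^k\theta)\cap[0,1)$. On $A_k$ the denominator $|r-e^{i\theta}|^2\gtrsim 2^{2k}\theta^2$, so the integrand is $\lesssim \theta/(2^{2k}\theta^2) = 2^{-2k}\theta^{-1}$, and $\alpha(A_k)\leq \alpha([1-2^{k+1}\theta,1))\leq 2^{k+1}M\theta$; summing the geometric series gives the contribution $\lesssim M$. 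On $[1-\theta,1)$ the denominator is $\gtrsim \theta^2$, so the contribution is $\lesssim \theta\cdot\alpha([1-\theta,1))/\theta^2 \leq M$. Combining yields $|I(\theta)|\lesssim M$ uniformly in $\theta$, proving \eqref{sup-theta}.

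For the necessity direction, assume $N:=\sup_\theta |I(\theta)|<\infty$. Given $\delta\in(0,\pi/2)$, set $\theta=\delta$. For every $r\in[1-\delta,1)$ one checks that $|r-e^{i\delta}|^2 \leq C\delta^2$ (since $|1-r|\leq\delta$ and $|1-e^{i\delta}|\lesssim \delta$), and $\sin\delta \geq (2/\pi)\delta$, so the integrand is bounded below by $c/\delta$ on $[1-\delta,1)$. This yields
\[
\frac{\alpha([1-\delta,1))}{\delta} \leq \frac{1}{c}\,I(\delta) \leq \frac{N}{c}.
\]
For $\delta\in[\pi/2,1)$ one has $\alpha([1-\delta,1))/\delta \leq 2\alpha([0,1))/\pi<\infty$ trivially from the finiteness of $\alpha$. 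Combining the two ranges gives the desired uniform bound on $\alpha([1-\delta,1))/\delta$.

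The only real obstacle is the careful comparison $(r-\cos\theta)^2+\sin^2\theta \asymp (1-r)^2+\theta^2$ in the relevant regime; once this is in place, both directions are short dyadic estimates.
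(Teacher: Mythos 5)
Your proof is correct, but it takes a genuinely different route from the paper. The paper handles this lemma by recognizing the integrand as (a constant times) the Poisson kernel of the upper half plane evaluated at $e^{i\theta}$, transporting the measure under the M\"obius map $\phi(z)=\frac{z-1}{z+1}$ so that $\theta\in(0,\pi)$ corresponds to points $iy$ on the positive imaginary axis, and then invoking Garnett's criterion (Lemma \ref{lem-Ramey}) that $\sup_{y>0}\int_\R \frac{y}{t^2+y^2}\nu(dt)<\infty$ is equivalent to linear growth of $\nu$ on symmetric intervals; the equivalence with $\sup_\delta \alpha([1-\delta,1))/\delta<\infty$ then follows by comparing $\widetilde\sigma([-L,L])$ with $\sigma(I_L)$. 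You instead prove the statement directly in the disk by dyadic decomposition near $r=1$: for sufficiency you split $[0,1)$ at $1-\theta$ and use $(1-r)^2\gtrsim 2^{2k}\theta^2$ on the dyadic blocks together with $\alpha([1-2^{k+1}\theta,1))\le 2^{k+1}M\theta$, and for necessity you exploit the positivity of the integrand for $\theta\in(0,\pi)$ and test at $\theta=\delta$, where the kernel is $\gtrsim 1/\delta$ on $[1-\delta,1)$. In effect you reprove the content of Garnett's lemma in this special case, which makes your argument self-contained and elementary (no conformal transfer, no push-forward bookkeeping, no external citation), at the cost of a few lines of explicit estimates; the paper's route is shorter modulo the quoted lemma and stays within the Poisson-kernel framework it reuses later for the half-plane results. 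Two cosmetic points: the case ``$\delta\in[\pi/2,1)$'' in your necessity step is vacuous since $\pi/2>1$, so all $\delta\in(0,1)$ already fall in the first regime; and when $2^{k+1}\theta\ge 1$ the bound $\alpha(A_k)\le 2^{k+1}M\theta$ uses the (immediate) observation that $\alpha([0,1))\le M$, obtained by letting $\delta\to 1^{-}$ in the hypothesis.
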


We postpone the proof of Lemma \ref{prop:bergman-carleson} for a while and proceed to the proof of Theorem~\ref{thm-BBB-weight}.

\begin{lemma}\label{lem-bdd-w}
Let $\mu(dz)= \sigma(dr)d\theta$ be a radial boundary-touching $(1,2)$-Carleson measure on $\D$, then there exists a function $w_\sigma \in L^\infty(\T)$ such that
\begin{align}\label{w-fourier}
\widehat{w}_\sigma (n)=\sgn(n)\sigma_n,\quad n\in\mathbb{Z}.
\end{align}
\end{lemma}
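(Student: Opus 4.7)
The plan is to construct $w_\sigma$ directly as an absolutely convergent integral representation of the formal Fourier series $\sum_{n\geq 1}\sigma_n(e^{in\theta}-e^{-in\theta})$, and then deduce its $L^\infty$-bound from Lemma \ref{prop:bergman-carleson} together with the radial $(1,2)$-Carleson characterization (Lemma \ref{lem-radial-Carleson}).

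First, formally interchange summation with the integral defining $\sigma_n=\int_0^1 r^{2|n|}\sigma(dr)$, and sum the two resulting geometric series in $r^2 e^{\pm i\theta}$. A short computation gives
\[
\sum_{n\geq 1}r^{2n}(e^{in\theta}-e^{-in\theta})=\frac{r^2 e^{i\theta}}{1-r^2e^{i\theta}}-\frac{r^2 e^{-i\theta}}{1-r^2 e^{-i\theta}}=\frac{2i\,r^2\sin\theta}{|1-r^2 e^{i\theta}|^2},
\]
so I would define
\[
w_\sigma(e^{i\theta}):=\int_0^1 \frac{2i\,r^2\sin\theta}{|1-r^2 e^{i\theta}|^2}\,\sigma(dr), \qquad \theta\in[0,2\pi).
\]
Matching Fourier coefficients is then a direct Fubini computation using $|1-r^2 e^{i\theta}|^{-2}=\sum_{k\in\Z}r^{2|k|}e^{ik\theta}$ (valid for $r<1$), and the Fubini interchange is justified at the end once the integral is known to be uniformly bounded.

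The key step is the $L^\infty$-bound. I would change variables $s=r^2$ and let $\tilde{\sigma}$ be the pushforward of $\sigma$ under $r\mapsto r^2$, so that, using $|1-se^{i\theta}|^2=(s-\cos\theta)^2+\sin^2\theta$,
\[
\frac{w_\sigma(e^{i\theta})}{2i}=\int_0^1\frac{\sin\theta}{(s-\cos\theta)^2+\sin^2\theta}\,\alpha(ds), \quad \alpha(ds):=s\,\tilde\sigma(ds).
\]
This is exactly the expression appearing in Lemma \ref{prop:bergman-carleson}. Hence uniform boundedness in $\theta$ reduces to checking $\sup_{0<\delta<1}\alpha([1-\delta,1))/\delta<\infty$. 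Since $\alpha([1-\delta,1))\leq \tilde\sigma([1-\delta,1))=\sigma([\sqrt{1-\delta},1))$ and $1-\sqrt{1-\delta}\leq\delta$, the radial $(1,2)$-Carleson condition on $\mu$ (Lemma \ref{lem-radial-Carleson}) gives
\[
\alpha([1-\delta,1))\leq\sigma([1-(1-\sqrt{1-\delta}),1))\leq C(1-\sqrt{1-\delta})\leq C\delta,
\]
and the hypothesis of Lemma \ref{prop:bergman-carleson} is satisfied.

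The only delicate point I anticipate is making sure the change of variables and the pushforward inequality are carried out cleanly; the rest is a routine Fubini-plus-geometric-series manipulation. Once $\|w_\sigma\|_{L^\infty(\T)}<\infty$ is established, the identity $\widehat{w}_\sigma(n)=\sgn(n)\sigma_n$ for every $n\in\Z$ follows (with $\sgn(0)=0$ consistent with $w_\sigma$ being purely imaginary with odd symmetry in $\theta$), completing the proof.
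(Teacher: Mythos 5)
Your proof is correct and follows essentially the same route as the paper's: the same kernel (note $|1-r^2e^{i\theta}|=|r^2-e^{-i\theta}|$), the same substitution $s=r^2$ with the extra factor $s$ absorbed into the pushforward measure, the same appeal to Lemmas \ref{lem-radial-Carleson} and \ref{prop:bergman-carleson} for the $L^\infty$ bound, and the same Fubini argument for the Fourier coefficients. One small slip: the parenthetical identity $|1-r^2e^{i\theta}|^{-2}=\sum_{k\in\Z}r^{2|k|}e^{ik\theta}$ is false (the right-hand side equals $(1-r^4)\,|1-r^2e^{i\theta}|^{-2}$), but it is also unnecessary, since your opening geometric-series computation already gives the expansion $\frac{2i\,r^2\sin\theta}{|1-r^2e^{i\theta}|^2}=\sum_{n\in\Z^*}\sgn(n)\,r^{2|n|}e^{in\theta}$ for each fixed $r<1$, which is exactly what the Fubini interchange (justified by the uniform bound, as in the paper) requires.
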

\begin{proof}
Under the assumption of the lemma,  set
\begin{align}\label{w-def}
w_\sigma(e^{i\theta})= 2i\int_0^1\frac{r^2\sin\theta}{|r^2-e^{-i\theta}|^2}\sigma(dr).
\end{align}
We first show that $w_\sigma\in L^\infty(\T)$.  Indeed,  by change-of-variable $r = \sqrt{s}$,
\[
w_\sigma(e^{i\theta}) =  2 i
\int_0^1   \frac{\sin\theta}{(\cos\theta-s)^2+\sin^2\theta}\sigma'(ds),
\]
where $\sigma'(ds) = s \sigma_{*}(ds)$ with
$\sigma_*(ds)$ being  the push-forward of  the measure $\sigma$ under the  map $s=r^2$. By Lemma \ref{lem-radial-Carleson}, there exists a constant $C>0$ such that
\[
\sigma([1-\delta, 1))\le C \delta, \quad \forall \delta\in (0,1).
\]
Then, by the definition of $\sigma'$, there exists a constant $C'>0$ such that
\[
\sigma'([1-\delta, 1))\le C' \delta, \quad \forall \delta\in (0,1).
\]
Hence, $w_\sigma \in L^\infty(\mathbb{T})$ by Lemma \ref{prop:bergman-carleson}.

It remains to show the equality \eqref{w-fourier}.
Since $w_\sigma\in L^\infty(\T)$, we have
\[
\sup_{\theta\in [0,2\pi)}\int_0^1\frac{r^2|\sin\theta|}{|r^2-e^{-i\theta}|^2}\sigma(dr)  = \sup_{\theta\in [0,2\pi)} \Big| \int_0^1\frac{r^2\sin\theta}{|r^2-e^{-i\theta}|^2}\sigma(dr)\Big|<\infty.
\] Therefore, for any $n\in \Z$, by Fubini's Theorem,
\begin{align*}
\widehat{w}_\sigma (n)& = \frac{1}{2\pi} \int_0^{2\pi}
 \Big(2i\int_0^1\frac{r^2\sin\theta}{|r^2-e^{-i\theta}|^2}\sigma(dr)\Big) e^{-in \theta} d\theta
\\
& = \int_0^1 \Big(  \frac{1}{2\pi} \int_0^{2\pi}
 2i\frac{r^2\sin\theta}{|r^2-e^{-i\theta}|^2}e^{-in \theta} d\theta \Big) \sigma(dr).
\end{align*}
By the elementary identity (which converges absolutely for any fixed $0\le r<1$)
\[
2i \frac{r^2\sin\theta}{|r^2-e^{-i\theta}|^2} = \sum_{n\in\mathbb{Z}^*} \text{sgn}(n) e^{in\theta} r^{2|n|}, \quad \forall r\in [0,1),
\]
we then have
\[
 \frac{1}{2\pi} \int_0^{2\pi}
 2i\frac{r^2\sin\theta}{|r^2-e^{-i\theta}|^2}e^{-in \theta} d\theta   =  \sgn(n) r^{2|n|}
\]
and hence
\[
\widehat{w}_\sigma (n)  = \sgn(n) \int_0^1 r^{2|n|} \sigma(dr)= \sgn(n) \sigma_n.
\]
This is the desired equality \eqref{w-fourier}.
\end{proof}

\begin{proof}[Proof of Proposition \ref{thm:disk-2B}]
Let $u\in C^\infty(\mathbb{T})$. Note that
$
\| u \|_{L^2(\T)} \le \| u- \widehat{u}(0)\|_{L^2(\T)} + |\widehat{u}(0)|.
$
If $\mathcal{T}_a u = f + g$ with $f\in H_\mu(\T)$ and $g\in L^1(\T)$, then
$
a(0) \widehat{u}(0)= \widehat{f}(0)+ \widehat{g}(0).
$
Moreover,
\[
| a(0) \widehat{u}(0)|\le  |\widehat{f}(0)|+ |\widehat{g}(0)|\le \frac{\| f\|_{H_\mu(\T)}}{\sqrt{\mu(\D)}} + \| g\|_{L^1(\T)}\leq \frac{1+\sqrt{\mu(\D)}}{\sqrt{\mu(\D)}}\left(\| f\|_{H_\mu(\T)}+ \| g\|_{L^1(\T)}\right).
\]
By the assumption  that $a(0)\not=0$, we have
\[
|\widehat{u}(0)|\le \frac{1+\sqrt{\mu(\D)}}{|a(0)|\sqrt{\mu(\D)}}\|\mathcal{T}_a u\|_{H_\mu(\T)+L^1(\T)}.
\]
Therefore, from now on,  we may assume that  $\widehat{u}(0) =0$.  Take any pairs of decompositions
\begin{align}\label{2-dec}
\left\{
  \begin{array}{ll}
    \mathcal{T}_a u&=f_1+g_1, \\
    \mathcal{T}_b\mathcal{T}_a u&=f_2+g_2,
  \end{array}
\right.
\end{align}
with $f_1,f_2\in H_\mu(\mathbb{T})$ and $g_1,g_2\in L^1(\mathbb{T})$. Then for any $0<r<1$, we have (the following Poisson convolutions will be used in the proof of the equality \eqref{Pl-id} below)
\begin{align}\label{P-conv}
\left\{
  \begin{array}{ll}
    P_r^\D * (\mathcal{T}_a u) &= P_r^\D * f_1+P_r^\D * g_1, \\
    P_r^\D * (\mathcal{T}_b \mathcal{T}_a u)&=P_r^\D * f_2+P_r^\D * g_2.
  \end{array}
\right.
\end{align}
  That is,
\begin{align}\label{E:coefficientAB}
\left\{
\begin{array}{ll}
r^{|n|}a(n)\widehat{u}(n)&=r^{|n|}\widehat{f}_1(n)+
r^{|n|}\widehat{g}_1(n), \quad n\in\Z,
\\
r^{|n|}b(n)a(n)\widehat{u}(n)&=r^{|n|}\widehat{f}_2(n)+
r^{|n|}\widehat{g}_2(n), \quad n\in \Z.
\end{array}
\right.
\end{align}
From \eqref{E:coefficientAB}, we have
\begin{align}\label{I+II}
\begin{split}
\|P_r^\D * u\|_{L^2(\mathbb{T})}^2&=\sum_{n\in \Z^*} r^{2|n|}|\widehat{u}(n)|^2\\
&=\underbrace{\sum_{n\in \mathbb{Z}^*}\frac{r^{|n|}}{a(n)}\hat{f}_1(n)
r^{|n|}\overline{\hat{u}(n)}}_{\text{denoted by I}}
+\underbrace{\sum_{n\in\mathbb{Z}^*}\frac{r^{|n|}}{a(n)}
\hat{g}_1(n)r^{|n|}\overline{\hat{u}(n)}}_{\text{denoted by II}}.
\end{split}
\end{align}
By Cauchy-Schwarz's inequality,
\begin{align}\label{I-ineq}
\begin{split}
|\text{I}|&\leq \Big(\sum_{n\in\mathbb{Z}^*}r^{2|n|}|
\overline{\hat{u}(n)}|^2\Big)^{1/2}
\Big(\sum_{n\in\mathbb{Z}^*}
\frac{r^{2|n|}|\widehat{f}_1(n)|^2}{|a(n)|^2}\Big)^{1/2}\\
&\leq \sqrt{C_b/2\pi}\|P_r^\D *u\|_{L^2(\mathbb{T})}\|P_r^\D *f_1\|_{H_\mu(\T)},
\end{split}
\end{align}
where we have used the fact that if $(a,b)$ is a $\mu$-adapted pair of sequences, then  by \eqref{ab-cond} and \eqref{def-Cb},  for any $v\in H_\mu(\T)$,
\begin{align}\label{a-mu-norm}
\Big(\sum_{n\in\mathbb{Z}^*}
\frac{|\widehat{v}(n)|^2}{|a(n)|^2}\Big)^{1/2}= \Big(\sum_{n\in\mathbb{Z}^*}
|\widehat{v}(n)|^2 |b(n)| \sigma_n \Big)^{1/2}\le \sqrt{C_b/2\pi}\| v\|_{H_\mu(\T)}.
\end{align}

By \eqref{E:coefficientAB},
\begin{align}\label{2=3+4}
\text{II}&=\underbrace{\sum_{n\in\mathbb{Z}^*}\frac{r^{|n|}
(a(n)\widehat{u}(n)-\widehat{f}_1(n))}{|a(n)|^2 b(n)}
r^{|n|}\overline{\widehat{f}_2(n)}}_{\text{denoted by III}}+
\underbrace{\sum_{n\in\mathbb{Z}^*}\frac{r^{|n|}\widehat{g}_1(n)
r^{|n|}\overline{\widehat{g}_2(n)}}
{|a(n)|^2 b(n)}}_{\text{denoted by IV}}.
\end{align}
Then by Cauchy-Schwarz's inequality,
\begin{align*}
&|\text{III}| \le \Big|\sum_{n\in\mathbb{Z}^*}\frac{r^{|n|}
\widehat{u}(n)}{\overline{a(n)} b(n)}
r^{|n|}\overline{\widehat{f}_2(n)}\Big| + \Big|\sum_{n\in\mathbb{Z}^*}\frac{r^{|n|}
\widehat{f}_1(n)}{|a(n)|^2 b(n)}
r^{|n|}\overline{\widehat{f}_2(n)}\Big|
\\
& \leq \|P_r^\D * u\|_{L^2(\mathbb{T})}
\Big(\sum_{n\in \mathbb{Z}^*}\Big|\frac{r^{|n|}\widehat{f}_2(n)}
{a(n)b(n)}\Big|^2\Big)^{1/2}
+\Big(\sum_{n\in \mathbb{Z}^*}\frac{r^{2|n|}|\widehat{f}_1(n)|^2}
{|a(n)|^2|b(n)|}\Big)^{1/2}
\Big(\sum_{n\in \mathbb{Z}^*}\frac{r^{2|n|}|\widehat{f}_2(n)|^2}
{|a(n)|^2 |b(n)|}\Big)^{1/2}.
\end{align*}
Using similar inequality as \eqref{a-mu-norm}, under the conditions \eqref{ab-cond} and \eqref{def-Cb}, we have
\begin{align}\label{III-ineq}
|\text{III}|&\leq\sqrt{C_b/2\pi}\|P_r^\D * u\|_{L^2(\mathbb{T})}\|P_r^\D * f_2\|
_{H_\mu(\mathbb{T})}
+\frac{1}{2\pi}\|P_r^\D * f_1\|_{H_\mu(\mathbb{T})}
\|P_r^\D * f_2\|_{H_\mu(\mathbb{T})}.
\end{align}

We now proceed to the estimate of term $\mathrm{IV}$ in the decomposition \eqref{2=3+4}.
Note that
\[
\overline{\widehat{g}}_2(n)=\widehat{\overline{\widetilde{g}}}_2(n), \where \tilde{g}_2(e^{i\theta}):=g_2(e^{-i\theta}).
\]
For any $0< r<1$, set
\begin{align}\label{def-hr}
h_r=(P_r^\D *g_1)*(P_r^\D *\overline{\widetilde{g}}_2).
\end{align}
A priori, we only have $g_1*\overline{\widetilde{g}}_2 \in L^1(\T)$, but $h_r\in L^2(\T)$ for any $0< r<1$.
By \eqref{ab-cond},
\begin{align*}
\text{IV}= \sum_{n\in\mathbb{Z}^*} \sgn(n) \sigma_n  r^{|n|}\widehat{g}_1(n)
r^{|n|}\overline{\widehat{g}_2(n)} = \sum_{n\in\mathbb{Z}^*}\widehat{h}_r(n)\text{sgn}(n)\sigma_n.
\end{align*}
By Lemma \ref{lem-bdd-w}, $w_\sigma \in L^\infty(\T) \subset L^2(\T)$. Then the Plancherel's identity implies
\begin{align}\label{Pl-id}
\text{IV}=\sum_{n\in\mathbb{Z}^*}\widehat{h}_r(n)\widehat{w}_\sigma(n)
= \int_{\mathbb{T}} h_r \bar{w}_\sigma - \int_\mathbb{T} h_r \int_\mathbb{T} \bar{w}_\sigma.
\end{align}
Hence
\begin{align}\label{IV-ineq}
|\text{IV}|\leq 2\|h_r\|_{L^1(\mathbb{T})}\|w_\sigma\|_{L^\infty(\mathbb{T})}\leq 2\|w_\sigma\|_{L^\infty(\mathbb{T})} \|P_r^\D *g_1\|_{L^1(\mathbb{T})} \|P_r^\D *g_2\|_{L^1(\mathbb{T})}.
\end{align}

By  \eqref{I+II}, \eqref{I-ineq}, \eqref{2=3+4}, \eqref{III-ineq} and \eqref{IV-ineq},   there is a constant $C = C(a, b, \mu)$, depending only on $(a, b)$ and the measure $\mu$ but  not on  $r\in (0,1)$,  such that
\begin{align*}
\| P_r^\D *u \|_{L^2(\mathbb{T})}^2 \le &  C\|P_r^\D * u \|_{L^2(\mathbb{T})} \| P_r^\D *f_1\|_{H_\mu(\mathbb{T})} +  C  \|P_r^\D *u\|_{L^2(\mathbb{T})} \| P_r^\D *f_2\|_{H_\mu(\mathbb{T})} +
\\
& +  C \| P_r^\D *f_1\|_{H_\mu(\mathbb{T})} \| P_r^\D *f_2\|_{H_\mu(\mathbb{T})} + C \|P_r^\D *g_1\|_{L^1(\mathbb{T})} \|P_r^\D *g_2\|_{L^1(\mathbb{T})}.
\end{align*}
Therefore, by a standard argument, there is a constant $C' = C'(a, b, \mu)$ such that
\begin{align*}
\| P_r^\D *u \|_{L^2(\mathbb{T})}&\leq C'
 \Big(\| P_r^\D *f_1\|_{H_\mu(\mathbb{T})} +\| P_r^\D * f_2\|_{H_\mu(\mathbb{T})} + \|P_r^\D *g_1\|_{L^1(\mathbb{T})}+ \|P_r^\D *g_2\|_{L^1(\mathbb{T})}\Big)\\
&\leq
C' \Big(\| f_1\|_{H_\mu(\mathbb{T})} + \|g_1\|_{L^1(\mathbb{T})}+\| f_2\|_{H_\mu(\mathbb{T})} + \|g_2\|_{L^1(\mathbb{T})}\Big),
\end{align*}
where the last inequality is due to the contractive property  of the  Poission convolution on  both $H_\mu(\T)$ and $L^1(\T)$. Let $r$ approach to $1$, then
\[
\|u\|_{L^2(\mathbb{T})}\leq
C' \Big(\| f_1\|_{H_\mu(\mathbb{T})} + \|g_1\|_{L^1(\mathbb{T})}+ \| f_2\|_{H_\mu(\mathbb{T})}  + \|g_2\|_{L^1(\mathbb{T})}\Big),
\]
Since the decompositions \eqref{2-dec}  are arbitrary,   we obtain the desired inequality \eqref{2B-ineq-goal}.
\end{proof}

\begin{proof}[Proof of Theorem \ref{thm-BBB-weight}]
If $\mu = \sigma(dr)d \theta$ is a radial boundary-touching  $(1,2)$-Carleson measure on $\D$, then we obtain  the inequality \eqref{A-H-ineq} from Proposition \ref{thm:disk-2B} by taking
\[
a(n)= \sqrt{2\pi}\Big(\int_{\D}|z|^{2|n|}\mu(dz)\Big)^{-1/2} =  \frac{1}{\sqrt{\sigma_n}}  \an b(n) = \sgn(n).
\]

Conversely, if the inequality \eqref{A-H-ineq} holds, then  by the argument in the first two paragraphs of  \S \ref{sec-15-11},  the measure $\mu$ is a $(1,2)$-Carleson measure on $\D$.
\end{proof}

It remains to prove  Lemma \ref{prop:bergman-carleson}. We shall apply the following result due to Garnett about the boundary behavior of Poisson integrals on the upper half plane $\hh$.

\begin{lemma}[{\cite[Theorem 4.2, Chapter I]{garnett}, \cite[pp.210]{Ramey-1988}}]\label{lem-Ramey}
Let $\nu$ be a measure on $\mathbb{R}$ with
$
\int_\mathbb{R}\frac{1}{1+t^2} \nu(dt)<\infty.
$
Then the following two assertions are equivalent:
\begin{itemize}
\item[(1)]
$
\sup_{y>0}\int_{\R}\frac{y}{t^2+y^2}\nu(dt)<\infty
$
\item[(2)]
$
\sup_{L>0}\frac{\nu([-L,L])}{2L}<\infty.
$
\end{itemize}
\end{lemma}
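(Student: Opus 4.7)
I would prove both implications directly, using only elementary kernel estimates and a dyadic decomposition of $\R$; no further machinery seems necessary.

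For the forward direction, the plan is to test the Poisson integral at the single scale $y=L$. Since $\tfrac{L}{t^2+L^2}\ge \tfrac{1}{2L}$ on the interval $[-L,L]$, one has
\begin{equation*}
\frac{\nu([-L,L])}{2L}\;\le\;\int_{-L}^{L}\frac{L}{t^2+L^2}\,\nu(dt)\;\le\;\sup_{y>0}\int_{\R}\frac{y}{t^2+y^2}\,\nu(dt),
\end{equation*}
and taking the supremum over $L>0$ immediately yields the Carleson bound.

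For the reverse direction, set $C:=\sup_{L>0}\nu([-L,L])/(2L)$, fix $y>0$, and split $\R$ into the central interval $\{|t|\le y\}$ together with the dyadic annuli $A_k:=\{2^{k}y<|t|\le 2^{k+1}y\}$, $k\ge 0$. On the central piece the pointwise bound $\tfrac{y}{t^2+y^2}\le 1/y$ together with $\nu([-y,y])\le 2Cy$ gives a contribution at most $2C$. On each $A_k$ the kernel is at most $\tfrac{1}{4^{k}y}$ and the mass is at most $\nu([-2^{k+1}y,2^{k+1}y])\le 2^{k+2}Cy$, so the integral over $A_k$ is at most $4C\cdot 2^{-k}$. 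Summing the geometric series,
\begin{equation*}
\int_{\R}\frac{y}{t^2+y^2}\,\nu(dt)\;\le\;2C+4C\sum_{k\ge 0}2^{-k}\;=\;10\,C,
\end{equation*}
uniformly in $y$, which is the desired Poisson bound.

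The integrability assumption $\int_{\R}(1+t^2)^{-1}\,\nu(dt)<\infty$ is needed only so that both sides of the equivalence are a priori meaningful: the Carleson condition already implies it by the same dyadic estimate (taking $y=1$), while the Poisson-boundedness condition implies it by specializing to $y=1$. I do not anticipate any genuine obstacle here; the whole argument rests on one pointwise lower bound for the Poisson kernel at a well-chosen scale, together with an absolutely convergent dyadic summation, and the constants produced ($10C$ in one direction, $M$ in the other) are already explicit.
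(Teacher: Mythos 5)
Your proof is correct. There is nothing in the paper to compare it against: the lemma is quoted as Garnett's theorem with a pointer to Ramey--Ullrich (p.~210) and no argument is given, so your self-contained elementary proof is a genuine addition rather than a variant of the paper's route. Both implications check out: testing at the single scale $y=L$ with the pointwise bound $\frac{y}{t^2+y^2}\ge\frac{1}{2L}$ on $[-L,L]$ gives $\sup_{L>0}\nu([-L,L])/(2L)\le\sup_{y>0}\int_\R\frac{y}{t^2+y^2}\,\nu(dt)$ with constant $1$, and in the converse direction the kernel bound $\frac{y}{t^2+y^2}\le\frac{1}{4^k y}$ on the annulus $A_k$ against the mass bound $\nu([-2^{k+1}y,2^{k+1}y])\le 2^{k+2}Cy$ yields the geometric series and the uniform bound $10C$, exactly as you compute; you are also right that the standing hypothesis $\int_\R(1+t^2)^{-1}\nu(dt)<\infty$ serves only to make the Poisson integrals a priori finite and follows from either condition by specializing to $y=1$. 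The only blemish is cosmetic: the stray ``$M$'' in your closing sentence (the constant in the forward direction is simply the Poisson supremum itself). Since the paper uses only the qualitative equivalence (for the boundedness of $w_\sigma$ in Lemma~\ref{lem-bdd-w} and of $W^\Pi$ in \eqref{def-W-pi}), your quantitative version with explicit constants would serve as a drop-in replacement for the citation.
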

\begin{proof}[Proof of Lemma \ref{prop:bergman-carleson}]
Note that
\begin{align*}
\sup_{\theta\in [0,2\pi)} \Big|\int_0^1 \frac{\sin\theta}{(r-\cos\theta)^2+\sin^2\theta} \alpha(dr)\Big| = \sup_{\theta\in (0,\pi)} \int_0^1 \frac{\sin\theta}{(r-\cos\theta)^2+\sin^2\theta} \alpha(dr).
\end{align*}
For any $\theta\in(0,\pi)$, consider the point $z=e^{i\theta}=\cos\theta+i\sin\theta$ and recall  the Poisson kernel $P_z^\hh$ at the point $z\in \hh$ given in \eqref{H-Poi-kernel}, then
\[
\int_0^1 \frac{\sin\theta}{(r-\cos\theta)^2+\sin^2\theta}\alpha(dr)=
\pi \int_{\mathbb{R}}P^{\mathbb{H}}_{e^{i\theta}}(t)1_{[0,1)}(t)\alpha(dt).
\]
Consider the M\"obius transformation  $\phi$ defined by
$
\phi(z)=\frac{z-1}{z+1}.
$
Then $\phi$ is an automorphism of the upper half plane and
\[
P^\hh_{\phi(z)}(\phi(t)) |\phi'(t)| =  P^\hh_z(t), \quad z \in \hh,\, t \in \R\setminus \{-1\}.
\]
Note that when $\theta$ ranges over $(0, \pi)$, the image $\phi(e^{i\theta})$ ranges over $i \R_{+}$.
Therefore,
\begin{align*}
\sup_{\theta\in (0,\pi)}
\int_{\mathbb{R}}P^\mathbb{H}_{e^{i\theta}}(t)1_{[0,1)}(t)\alpha(dt)&= \sup_{\theta\in (0,\pi)}
\int_{\mathbb{R}} P^\hh_{\phi(e^{i\theta})}(\phi(t)) |\phi'(t)|1_{[0,1)}(t)\alpha(dt)
\\
&  =
\sup_{y>0}\int_0^1 P_{iy}^{\mathbb{H}}(\phi(t))|\phi'(t)|\alpha(dt).
\end{align*}
By change-of-variable $s = \phi(t)$,
\[
\int_0^1 P_{iy}^{\mathbb{H}}(\phi(t))|\phi'(t)|\alpha(dt)  = \int_{-1}^0 P_{iy}^\hh(s)  \frac{(1-s)^2}{2} \alpha\circ \phi^{-1}(ds).
\]
Then
\begin{align*}
\sup_{\theta\in (0,\pi)} \int_0^1 \frac{\sin\theta}{(r-\cos\theta)^2+\sin^2\theta}\alpha(dr)& =   \frac{\pi}{2} \sup_{y>0}\int_\R \frac{y}{y^2 + s^2} \widetilde{\alpha}(ds),
\end{align*}
where
$
\widetilde{\alpha}(ds) =(1-s)^2\mathds{1}_{(-1,0)}(s) \alpha\circ \phi^{-1}(ds)$.
Clearly, $\int_\R \frac{\widetilde{\alpha}(ds)}{1+s^2}<\infty$. Therefore,  by  Lemma~\ref{lem-Ramey},  the inequality \eqref{sup-theta} holds if and only if
\begin{align}\label{sup-LL}
\sup_{L>0}\frac{\widetilde{\alpha}([-L,L])}{L}<\infty.
\end{align}
By the definition of $\widetilde{\alpha}$,  one can  see that $\alpha(I_L)  \le \widetilde{\alpha}([-L, L]) \le 4 \alpha(I_L)$,
where $I_L$ is the open interval \[
I_L: = \Big(\frac{1-\min(L,1)}{1+\min(L,1)},  1\Big) \subset (0,1).
\]
It follows that,  \eqref{sup-LL} holds if and only if
$\sup_{L>0}\frac{\alpha(I_L)}{L}<\infty$,
which in turn is equivalent to
\[
\sup_{0<\delta<1}\frac{\alpha([1-\delta,1))}{\delta}<\infty.
\]
This completes the  whole proof of Lemma \ref{prop:bergman-carleson}.
\end{proof}

\subsection{Proof of Corollary \ref{prop-complement}}
Fix   a  radial boundary-touching $(1,2)$-Carleson measure
$
\mu(dz)= \sigma(dr) d\theta.
$
By Theorem \ref{thm-disk-stable},
\begin{align}\label{stable-conseq}
(B^2(\D,\mu)+h^1(\D))\cap\Hol(\D) = B^2(\D,\mu)\cap \Hol(\D)= A^2(\D,\mu).
\end{align}
By \eqref{A-mu-BH}, there exists a constant $C= C_\mu>0$, such that  for all $f\in \Hol(\D)$,
\begin{align}\label{A=BH}
\frac{1}{C}\| f\|_{A^2(\D,\mu)}\le \| f\|_{B^2(\D,\mu)+h^1(\D)} \le \|f\|_{B^2(\D,\mu)} = \|f \|_{A^2(\D,\mu)}.
\end{align}
That is, the identity map
\[
id: A^2(\D,\mu) \rightarrow (B^2(\D,\mu)+h^1(\D))\cap\Hol(\D) \subset B^2(\D,\mu)+h^1(\D)
\]
is an isomorphic isomorphism. Thus $(B^2(\D,\mu)+h^1(\D))\cap\Hol(\D)$ is a closed subspace of $B^2(\D,\mu)+h^1(\D)$.

Now suppose that the extra condition \eqref{conv-sing-int} is satisfied. We are going to  show that  the closed subspace $(B^2(\D,\mu)+h^1(\D))\cap\Hol(\D)$ is complemented in $B^2(\D,\mu)+h^1(\D)$. Indeed, under the condition \eqref{conv-sing-int}, for any $\theta \in [0, 2 \pi)$, the following holomorphic function
\[
k_\theta(z): = \frac{1}{1 - e^{-i\theta} z} = \sum_{n\ge 0} e^{-in\theta} z^n,\quad z\in \D
\]
 belongs to  $B^2(\D,\mu)+h^1(\D)$ and
\begin{align}\label{cauchy-kernel}
M_\mu:= \sup_{\theta \in [0, 2\pi)} \| k_\theta\|_{B^2(\D,\mu)+h^1(\D)} \le \sup_{\theta \in [0, 2\pi)} \| k_\theta\|_{A^2(\D,\mu)} = \Big(\int_\D \frac{\mu(dz)}{1-|z|^2}\Big)^{1/2}<\infty.
\end{align}
Recall the definition \eqref{def-Q-proj} of $\mathcal{Q}_{+}$.   Clearly,  since $\mu$ is radial, $\mathcal{Q}_{+}$ defines an orthgonal projection from $B^2(\D, \mu)$ onto $A^2(\D,\mu)$. Then
\begin{align}\label{Q-on-B}
\|\mathcal{Q}_{+}(u)\|_{B^2(\D,\mu)+h^1(\D)} \le \| \mathcal{Q}_{+}(u)\|_{B^2(\D,\mu)} \le \|u\|_{B^2(\D,\mu)}, \quad \forall u \in B^2(\D,\mu).
\end{align}
 Note also that if $v= \sum_{n\in \Z} a_n e_n \in h^1(\D)$, that is,
\[
\widetilde{v}: = \sum_{n\in \Z} a_n e^{in \theta} \in L^1(\T) \an \| v\|_{h^1(\D)}   =\| \widetilde{v} \|_{L^1(\T)},
\]
then it is easy to see that
\[
\mathcal{Q}_{+}  (v)=  \mathcal{Q}_{+} \Big(\sum_{n\in \Z} a_n e_n\Big)=  \frac{1}{2\pi} \int_0^{2\pi}  k_\theta \widetilde{v}(e^{i\theta})  d\theta.
\]
Hence, by  \eqref{cauchy-kernel}, we have
\begin{align}\label{h-Q-A}
\begin{split}
\| \mathcal{Q}_{+}(v)\|_{B^2(\D,\mu)+h^1(\D)} & \le \frac{1}{2\pi} \int_0^{2\pi}  \| k_\theta\|_{B^2(\D,\mu)+h^1(\D)} |\widetilde{v}(e^{i\theta})|  d\theta
\\
& \le  M_\mu \| \widetilde{v}\|_{L^1(\T)}   =  M_\mu \| v\|_{h^1(\D)}.
\end{split}
\end{align}
By \eqref{Q-on-B} and \eqref{h-Q-A} and the definition of the norm on $B^2(\D,\mu)+h^1(\D)$,
\[
\|\mathcal{Q}_{+}(f)\|_{B^2(\D,\mu)+h^1(\D)} \le M_\mu \|f\|_{B^2(\D,\mu)+h^1(\D)}, \quad \forall f \in B^2(\D,\mu)+h^1(\D).
\]
It follows that $\mathcal{Q}_{+}$ defines a bounded linear projection from  $B^2(\D,\mu)+h^1(\D)$ onto
\[
(B^2(\D,\mu)+h^1(\D))\cap \Hol(\D).
\]
Hence $(B^2(\D,\mu)+h^1(\D))\cap \Hol(\D)$ is complemented in $B^2(\D,\mu)+h^1(\D)$.

Finally, assume that the condition \eqref{conv-sing-int} is not satisfied. Then
\[
\sum_{n\ge 0}\sigma_n= \sum_{n\ge 0}\int_0^1 r^{2n} \sigma(dr)= \int_0^1 \frac{\sigma(dr)}{1-r^2}   =  \frac{1}{2\pi}\int_\D \frac{\mu(dz)}{1-|z|^2} =\infty.
\]
 Let us show that $(B^2(\D,\mu)+h^1(\D))\cap\Hol(\D)$ is not complemented in $B^2(\D,\mu)+h^1(\D)$. Otherwise, there exists a bounded linear projection operator
\[
P: B^2(\D,\mu)+h^1(\D)\longrightarrow  B^2(\D,\mu)+h^1(\D)
\]
onto the closed subspace $(B^2(\D,\mu)+h^1(\D))\cap\Hol(\D)$. That is,
\begin{itemize}
\item $P\circ P= P$,
\item  $P(f)  = f$ for all $f \in (B^2(\D,\mu)+h^1(\D))\cap\Hol(\D)$,
\item $P(g) \in (B^2(\D,\mu)+h^1(\D))\cap\Hol(\D)$ for all $g\in B^2(\D,\mu)+h^1(\D)$.
\end{itemize}
Since $\mu$ is radial,  for any $\theta \in [0,2\pi)$, the rotation map $\tau_\theta$ defined by $\tau_\theta(f) (z)= f(e^{i\theta}z)$ preserves both the norms of functions in $B^2(\D,\mu)$ and the norms of functions in $h^1(\D)$. Therefore, $\tau_\theta$ also preserves the norms of functions in $B^2(\D,\mu)+h^1(\D)$:
\[
\| \tau_\theta (f)\|_{B^2(\D,\mu)+h^1(\D)} = \| f\|_{B^2(\D,\mu)+h^1(\D)}, \quad \forall f \in B^2(\D,\mu)+h^1(\D).
\]
Consequently, the operator-norm of the  composition operator
\[
P_\theta= \tau_{-\theta} \circ P \circ \tau_\theta: B^2(\D,\mu)+h^1(\D) \longrightarrow B^2(\D,\mu)+h^1(\D)
\] is bounded by that of $P$:
\[
\|P_\theta \| \le \| P\|.
\]
It can be easily checked that $P_\theta$ is also a projection operator from $B^2(\D,\mu)+h^1(\D)$ onto $(B^2(\D,\mu)+h^1(\D))\cap\Hol(\D)$.

Define a bounded linear operator  $\mathcal{P}: B^2(\D,\mu)+h^1(\D)\longrightarrow B^2(\D,\mu)+h^1(\D)$ via the Bochner integral  (see, e.g., \cite[Section V.5]{yosida})
\[
\mathcal{P}: = \frac{1}{2\pi}\int_0^{2\pi} P_\theta d\theta.
\]
Then
\begin{align}\label{bdd-P}
\|\mathcal{P}\|\le  \sup_{\theta\in [0, 2 \pi)} \| P_\theta\| = \| P\|<\infty
\end{align}
and
\begin{align}\label{P-f-Bochner}
\mathcal{P}(f)= \frac{1}{2\pi}\int_0^{2\pi} P_\theta(f) d\theta, \quad \forall f \in  B^2(\D,\mu)+h^1(\D).
\end{align}
Since the evalutation map $\ev_z$ defined in \eqref{def-ev} is  a continuous linear functional on $B^2(\D, \mu)+h^1(\D)$ for any $z\in \D$,
\begin{align}\label{P-f-z}
[\mathcal{P}(f)] (z) = \frac{1}{2\pi}\int_0^{2\pi} [P_\theta(f)](z) d\theta, \quad \forall f \in  B^2(\D,\mu)+h^1(\D).
\end{align}
Note that $P_\theta(f)= f$ for any $f \in (B^2(\D,\mu)+h^1(\D))\cap\Hol(\D)$ and any $\theta \in [0, 2\pi)$, thus
\[
\mathcal{P}(f) = f, \quad \forall f \in (B^2(\D,\mu)+h^1(\D))\cap\Hol(\D).
\]
On the other hand, for any integer $n\ge 1$,
\begin{align}\label{P=Riesz}
\mathcal{P}(e_{-n}) = 0, \quad \forall n \ge 1.
\end{align}
Indeed, for any $\theta \in [0, 2 \pi)$,
\[
(\tau_\theta (e_{-n}) ) (z) = \overline{( e^{i\theta} z)}^n = e^{-i n \theta} \bar{z}^n = e^{-in  \theta} e_{-n}(z).
\]
Thus $P\circ \tau_\theta (e_{-n}) =  e^{-in  \theta} P(e_{-n})$. By \eqref{stable-conseq},
\[
P(e_{-n})\in (B^2(\D,\mu)+h^1(\D))\cap\Hol(\D) = A^2(\D,\mu),
\]
 we can write
\[
P(e_{-n}) (z)= \sum_{k=0}^\infty c^{(n)}_k z^k \in A^2(\D,\mu),
\]
with
\begin{align}\label{square-c-k}
\| P(e_{-n})\|_{A^2(\D,\mu)}^2 = 2 \pi \sum_{k=0}^\infty |c_k^{(n)}|^2 \sigma_k<\infty.
\end{align}
Thus, for all $z\in \D$,
\begin{align*}
P_\theta (e_{-n}) (z)& = [\tau_{-\theta} (e^{-i n\theta} P(e_{-n}))](z) = e^{-i n \theta} [\tau_{-\theta} ( P(e_{-n}))](z)
\\
&  = e^{-in \theta} \sum_{k=0}^\infty c_k^{(n)} (e^{-i\theta} z)^k = \sum_{k=0}^\infty c_k^{(n)} e^{-i (k+n)\theta} z^k,
\end{align*}
where the last series converges absolutely  by the inequalities \eqref{low-sigma}, \eqref{square-c-k} and
\[
\Big(\sum_{k=0}^\infty |c_k^{(n)} z^k|\Big)^2\le  \sum_{k=0}^\infty |c_k^{(n)}|^2 \sigma_k  \sum_{k=0}^\infty \frac{|z|^{2k}}{\sigma_k} \le \sum_{k=0}^\infty |c_k^{(n)}|^2 \sigma_k  \sum_{k=0}^\infty \frac{|z|^{2k}}{\rho^{2k} \sigma([\rho, 1))}, \quad \forall \rho \in (0,1).
\]
Therefore, by \eqref{P-f-z}, for all $z\in \D$,
\begin{align*}
[\mathcal{P}(e_{-n})](z)& =  \int_0^{2\pi} \sum_{k=0}^\infty c_k^{(n)} e^{-i (k+n)\theta} z^k \frac{d\theta}{2\pi}
 =  \sum_{k=0}^\infty   \int_0^{2\pi}  c_k^{(n)} e^{-i (k+n)\theta} z^k \frac{d\theta}{2\pi} =0.
\end{align*}
This is the desired equality \eqref{P=Riesz}.

However, if we take the harmonic extension of the F\'ejer kernel on $\D$:
\[
\mathcal{F}_N(z)= \sum_{j=-N}^N \Big(1 - \frac{|j|}{N}\Big) e_j(z), \quad N\ge 1,
\]
then
\begin{align}\label{Fejer-norm}
\| \mathcal{F}_N\|_{B^2(\D, \mu)+h^1(\D)} \le \| \mathcal{F}_N\|_{h^1(\D)} = \| \mathcal{F}_N|_\T\|_{L^1(\T)} = 1.
\end{align}
Since $\mathcal{P}$ is a projection onto $(B^2(\D, \mu)+h^1(\D))\cap \Hol(\D)$ and satisfies \eqref{P=Riesz}, we have
\[
\mathcal{P}(\mathcal{F}_N) = \sum_{j=0}^N \Big(1 - \frac{j}{N}\Big) e_j
\]
and, by the radial assumption on $\mu$,
\[
\|\mathcal{P}(\mathcal{F}_N)\|_{A^2(\D,\mu)}^2 = \sum_{j=0}^N (1 - j/N)^2 \| e_j\|_{A^2(\D,\mu)}^2 =  2\pi \sum_{j=0}^N (1 - j/N)^2  \sigma_j.
\]
Then, by \eqref{A=BH},
\[
\liminf_{N\to\infty} \|\mathcal{P}(\mathcal{F}_N)\|_{B^2(\D,\mu)+h^1(\D)}^2  \ge \liminf_{N\to\infty} \frac{\|\mathcal{P}(\mathcal{F}_N)\|_{A^2(\D,\mu)}^2}{C^2}  \ge \frac{2 \pi}{C^2} \sum_{j=0}^\infty \sigma_j  = \infty.
\]
This contradicts to the following inequality (which is a consequence of    \eqref{bdd-P} and \eqref{Fejer-norm})
\[
\sup_{N\ge 1} \| \mathcal{P}(\mathcal{F}_N)\|_{B^2(\D,\mu)+h^1(\D)}\le \| P\|<\infty.
\]
Hence we complete the whole proof of the  theorem.

\section{The upper half plane case}
In this section, we will prove Theorem \ref{thm-upper-stable}.
For any Radon measure $\Pi$ on $\R_{+} = (0, \infty)$ satisfying  the condition  \eqref{good-Pi},  define
\begin{align}\label{def-Lxi}
\mathcal{L}_\Pi(\xi):  =  \left\{\begin{array}{cl} \int_{\R^{+}}e^{-4 \pi y|\xi|}\Pi(dy)& \text{if $\xi \in \R^{*}=\R\setminus\{0\}$},
\\
0 & \text{if $\xi = 0$}.
\end{array}
\right.
\end{align}
We use the following definition of the Fourier transform for $f\in L^1(\R)$:
\[
\widehat{f}(\xi): = \int_\R f(x) e^{- i 2 \pi x \xi} dx, \quad \xi \in \R.
\]
For any $g\in \hb^2(\hh, \mu)$ and $y>0$, recall  the function  $g_y$ defined in \eqref{def-gline}, i.e.,
\begin{align*}
g_y(x) : =  g(x+iy), \, \forall \, x\in \R.
\end{align*}
Note that the Fourier transform of the Poisson kernel  $P_{iy}^\hh$ given in \eqref{H-Poi-kernel} has the following form (see \cite[Chapter VI, p. 140]{Katznelson}):
\[
\widehat{P^\hh_{iy}}(\xi)= e^{-2\pi y |\xi|}, \quad \xi \in \R.
\]
Then, by \eqref{def-pc} and  \eqref{hb-and-b}, we have  $
\widehat{g}_y(\xi)=  e^{-2\pi (y-y')|\xi|} \widehat{g}_{y'}(\xi)$ for all $0< y'<y$
and hence
\begin{align}\label{y-y-consistency}
e^{2\pi y|\xi|} \widehat{g}_y(\xi)=  e^{2\pi y'|\xi|} \widehat{g}_{y'}(\xi), \quad \forall \, 0< y'<y.
\end{align}

\begin{definition*}
Let $\mu(dz)= dx \Pi(dy)$ be a boundary-touching $(1,2)$-Carleson measure on $\hh$. For any $g\in \hb^2(\hh, \mu)$,   define  a function $\widehat{g}_0$ by
\begin{align}\label{def-uhat}
\widehat{g}_0(\xi) : =  e^{2\pi y|\xi|} \widehat{g}_y(\xi), \, y >0,
\end{align}
where,  by \eqref{y-y-consistency}, the right hand side of the equality \eqref{def-uhat} is independent of $y>0$.
\end{definition*}
By \eqref{def-uhat} and the Plancherel's identity, the norm  of any $g\in \hb^2(\hh, \mu)$ has the form:
\begin{align}\label{Bmu-norm-exp}
\|g\|_{\hb^2(\hh, \mu)}   = \Big(\int_\R  |\widehat{g}_0(\xi)|^2  \mathcal{L}_\Pi(\xi)d\xi\Big)^{1/2}.
\end{align}


\begin{remark*}
If the function $\widehat{g}_0$ defined in \eqref{def-uhat} belongs to $L^2(\R)$, then it is the Fourier transform of a function $g_0\in L^2(\R)$ and   the equality \eqref{def-uhat} is equivalent to
$
g_y = P^\hh_{iy}* g_0.
$
However,  the notation $\widehat{g}_0$ is only formal for a general $g\in \hb^2(\hh, \mu)$, that is,  it may not correspond to the Fourier transform of a generalized function $g_0$ on $\R$.
\end{remark*}

\begin{definition*}
Suppose that $\mu(dz)= dx \Pi(dy)$ is a boundary-touching $(1,2)$-Carleson measure on $\hh$. Let $H_\mu(\R)$ be the Hilbert space defined by  the norm completion as follows:
\[
H_\mu(\R) := \overline{\left\{   f \in L^2(\R)\Big|  \|f\|_{H_\mu(\R)} = \Big(
    \int_\R  |\widehat{f}(\xi)|^2  \mathcal{L}_\Pi(\xi) d\xi\Big)^{1/2}<\infty
\right\}}^{\| \cdot \|_{H_\mu(\R)}}.
\]
\end{definition*}
For any $f\in L^2(\R)$, set
\[
\mathcal{P}^\hh(f) (z): = (P_{iy}^\hh* f)(x), \quad z = x + iy \in \hh.
\]
Then for any $f\in L^2(\R)$ with
$
 \int_\R  |\widehat{f}(\xi)|^2  \mathcal{L}_\Pi(\xi) d\xi<\infty,
$
we have
\begin{align}\label{eqn-module-equal}
\left\|\mathcal{P}^\hh(f)\right\|_{\hb^2(\hh, \mu)}^2=\int_\R  |\widehat{f}(\xi)|^2  \mathcal{L}_\Pi(\xi) d\xi=\|f\|_{H_\mu(\R)}^2.
\end{align}


Similar to the disk case,  for a given measure $\mu(dz)= dx \Pi(dy)$ on $\hh$, a pair $(a,b)$ of two functions on $\R$ is called $\mu$-adapted if the following conditions are satisfied:
\begin{itemize}
\item[(i)] for any $\xi\in \R^*$,
\begin{align}\label{ab-prod}
|a(\xi)|^2 b(\xi) \sgn(\xi)  = \mathcal{L}_\Pi(\xi)^{-1};
\end{align}
\item[(ii)] there exists a constant $C_b>0$ such that
\begin{align}\label{def-cb-H}
\frac{1}{C_b}\leq |b(\xi)|\leq C_b.
\end{align}
\end{itemize}

Given any Radon measure $\Pi$ on $\mathbb{R}_+$ satisfying \eqref{good-Pi},  Garnett's result stated in Lemma~\ref{lem-Ramey} implies that the following  function $W^\Pi$ belongs to $L^\infty(\mathbb{R})$:
\begin{align}\label{def-W-pi}
W^\Pi(x):=i\int_{\R_{+}}\frac{\pi x}{y^2+\pi^2x^2}\Pi(dy), \quad x\in \R.
\end{align}

\begin{proposition}\label{prop-2B-ineq-H}
Suppose that  $\mu(dz)= dx \Pi(dy)$ is a boundary-touching $(1,2)$-Carleson measure on $\hh$ and let $(a,b)$ be  a $\mu$-adapted pair of functions defined on $\R$. Then  for any $u\in L^2(\R)$,
\begin{align}\label{2B-H-goal}
\|u \|_{L^2(\mathbb{\R})}\leq  C
 (\|\mathcal{T}_a u\|_{H_\mu(\mathbb{R})+L^1(\mathbb{R})}
+\|\mathcal{T}_a\mathcal{T}_b u\|_{H_\mu(\mathbb{R})+L^1(\R)}),
\end{align}
where
$\mathcal{T}_a, \mathcal{T}_b$ are the Fourier multipliers associated to $a, b$ given by
\[
\widehat{\mathcal{T}_a u}(\xi)=a(\xi)\widehat{u}(\xi),\quad \widehat{\mathcal{T}_b (u)}(\xi)=b(\xi)\widehat{u}(\xi)
\]
and the constant $C = C(b, \Pi)>0$ can be taken to be
\begin{align}\label{const-bpi}
 C(b, \Pi) = \sqrt{C_b+ \|W^\Pi\|_{L^{\infty}(\R)}+1} <\infty
\end{align}
\end{proposition}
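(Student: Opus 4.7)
I will follow the structure of the proof of Proposition~\ref{thm:disk-2B}, replacing Fourier series on $\T$ and Poisson convolution on $\D$ with the Fourier transform on $\R$ and Poisson convolution by $P_{iy}^\hh$. Fix $u\in L^2(\R)$ and take arbitrary decompositions
\begin{align*}
\mathcal{T}_a u = f_1 + g_1, \qquad \mathcal{T}_a\mathcal{T}_b u = f_2+g_2,
\end{align*}
with $f_1,f_2 \in H_\mu(\R)$ and $g_1,g_2 \in L^1(\R)$. For each $y > 0$, convolution with $P_{iy}^\hh$ multiplies Fourier transforms by $e^{-2\pi y|\xi|}$ and produces functions in $L^1(\R)\cap L^2(\R)\cap L^\infty(\R)$, so Plancherel applies freely at every regularization level.

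Starting from $\|P_{iy}^\hh * u\|_{L^2(\R)}^2 = \int_\R e^{-4\pi y|\xi|}|\widehat u(\xi)|^2\,d\xi$, insert the relation $a(\xi)\widehat u(\xi) = \widehat f_1(\xi)+\widehat g_1(\xi)$ into one factor of $\widehat u$ and split into $\mathrm{I}+\mathrm{II}$ as in~\eqref{I+II}. The term $\mathrm{I}$ is controlled by Cauchy--Schwarz together with the identity
\begin{align*}
\int_\R \frac{|\widehat v(\xi)|^2}{|a(\xi)|^2}\,d\xi = \int_\R |\widehat v(\xi)|^2 |b(\xi)|\,\mathcal{L}_\Pi(\xi)\,d\xi \le C_b\,\|v\|_{H_\mu(\R)}^2,
\end{align*}
which is the continuous analog of~\eqref{a-mu-norm} and follows immediately from \eqref{ab-prod} and \eqref{def-cb-H}. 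For $\mathrm{II}$, I would use the second relation $a(\xi)b(\xi)\widehat u(\xi) = \widehat f_2(\xi) + \widehat g_2(\xi)$ to perform a further split $\mathrm{II}=\mathrm{III}+\mathrm{IV}$ parallel to~\eqref{2=3+4}; Cauchy--Schwarz then controls $\mathrm{III}$ by the same type of crossed products of $\|P_{iy}^\hh*u\|_{L^2(\R)}$ and $\|P_{iy}^\hh*f_j\|_{H_\mu(\R)}$ as in~\eqref{III-ineq}.

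The decisive step is the term $\mathrm{IV}$. Setting $\widetilde g_2(x):=g_2(-x)$ and $h_y := (P_{iy}^\hh * g_1) * (P_{iy}^\hh * \overline{\widetilde g_2})$, one has $h_y \in L^1(\R)\cap L^\infty(\R)\subset L^2(\R)$ with $\|h_y\|_{L^1(\R)} \le \|g_1\|_{L^1(\R)}\|g_2\|_{L^1(\R)}$ uniformly in $y > 0$, and $\widehat{h_y}(\xi) = e^{-4\pi y|\xi|}\widehat{g_1}(\xi)\overline{\widehat{g_2}(\xi)}$. Using \eqref{ab-prod} one obtains $\mathrm{IV} = \int_\R \widehat{h_y}(\xi)\,\sgn(\xi)\,\mathcal{L}_\Pi(\xi)\,d\xi$. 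By \eqref{good-Pi} and Garnett's Lemma~\ref{lem-Ramey}, the function $W^\Pi$ in~\eqref{def-W-pi} belongs to $L^\infty(\R)$, and a direct Fourier computation (Fubini justified as in the passage from~\eqref{w-def} to~\eqref{w-fourier}) identifies $\widehat{W^\Pi}(\xi)$ with $\sgn(\xi)\,\mathcal{L}_\Pi(\xi)$ up to the appropriate normalization; Plancherel therefore yields $|\mathrm{IV}| \lesssim \|W^\Pi\|_{L^\infty(\R)}\|h_y\|_{L^1(\R)}$.

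Combining the four estimates produces a quadratic inequality in $\|P_{iy}^\hh*u\|_{L^2(\R)}$; solving it and then letting $y\to 0^+$ via the contractivity of $P_{iy}^\hh$ on each of $L^2(\R)$, $L^1(\R)$ and $H_\mu(\R)$ delivers~\eqref{2B-H-goal}. Taking the infimum over admissible decompositions recovers the $H_\mu(\R)+L^1(\R)$ norms on the right, and the constants $\sqrt{C_b}$ and $C_b$ from $\mathrm{I}$ and $\mathrm{III}$ together with $\|W^\Pi\|_{L^\infty(\R)}$ from $\mathrm{IV}$ assemble, after the standard $x^2\le ax+b$ bookkeeping, into the constant~\eqref{const-bpi}. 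The principal obstacle is the Fourier identification of $W^\Pi$: on $\T$ the series $\sum_n \sgn(n)e^{in\theta}r^{2|n|}$ converges absolutely for every $r<1$, but on $\R$ the conjugate Poisson kernel only defines a principal-value operator, so interchanging the $\Pi(dy)$- and $dx$-integrals requires a careful regularization argument analogous to the one used to prove~\eqref{w-fourier}.
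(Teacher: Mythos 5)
Your handling of the terms $\mathrm{I}$--$\mathrm{III}$ coincides with the paper's argument: the same two decompositions, the same Poisson regularization $P_{iy}^\hh$, Cauchy--Schwarz combined with the continuous analogue of \eqref{a-mu-norm} coming from \eqref{ab-prod} and \eqref{def-cb-H}, and the same reduction of the last term to the pairing $\mathrm{IV}=\int_\R \widehat{h_y}(\xi)\,\sgn(\xi)\,\mathcal{L}_\Pi(\xi)\,d\xi$ with $h_y=(P_{iy}^\hh*g_1)*(P_{iy}^\hh*\overline{\widetilde g_2})$. The gap is in how you close this pairing.

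You propose to identify $\widehat{W^\Pi}$ with $\sgn(\xi)\mathcal{L}_\Pi(\xi)$ by a ``direct Fourier computation (Fubini justified as in the passage from \eqref{w-def} to \eqref{w-fourier})'' and then to apply Plancherel. Neither step is available on $\R$: $W^\Pi$ defined in \eqref{def-W-pi} lies only in $L^\infty(\R)$ (for $\Pi$ the Lebesgue measure it is $\tfrac{i\pi}{2}\sgn$), so it is not in $L^1(\R)$ or $L^2(\R)$ and Plancherel does not apply to the pair $(h_y,W^\Pi)$; moreover $\sgn(\xi)\mathcal{L}_\Pi(\xi)$ is in general not locally integrable at $\xi=0$ (for Lebesgue $\Pi$ it equals $1/(2\xi)$), so $\widehat{W^\Pi}$ exists only distributionally. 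This is precisely why the paper isolates Lemma \ref{lem-key-for-upper}: the identity \eqref{L1L2-test} is proved only for $u\in L^1(\R)\cap L^\infty(\R)$ satisfying the extra hypothesis \eqref{u-hat-L1}, and its proof is not the disk-case Fubini (where the kernel's Fourier series converges absolutely at each fixed $r<1$ and $w_\sigma\in L^\infty(\T)\subset L^2(\T)$) but a two-sided truncation $\Pi_{\varepsilon,R}$ of $\Pi$: one computes $\widehat{W^\Pi_{\varepsilon,R}}$ as an $L^2$-valued Bochner integral of conjugate Poisson kernels, controls the tails $\int_0^\varepsilon$ and $\int_R^\infty$ using $\Pi((0,y])\le Cy$ and integration by parts (domination by $\arctan(\varepsilon/\pi|x|)$), and passes to the limit by dominated convergence, which is exactly where \eqref{u-hat-L1} enters. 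In addition, before invoking that lemma for $h_y$ you must verify $\int_\R|\widehat{h_y}(\xi)|\,\mathcal{L}_\Pi(\xi)\,d\xi<\infty$; this does not follow from $h_y\in L^1(\R)\cap L^\infty(\R)$ and has to be extracted from the decompositions themselves, writing $\widehat g_1/a=\widehat u-\widehat f_1/a$ and $\widehat g_2/(ab)=\widehat u-\widehat f_2/(ab)$, both in $L^2(\R)$, and using \eqref{ab-prod} with Cauchy--Schwarz (the paper's Claim A). Your closing sentence acknowledges that a regularization is needed, but the one you point to does not transfer, and the truncation of $\Pi$, the tail estimates, and the integrability check on $\widehat{h_y}$ are all missing; until they are supplied, the bound $|\mathrm{IV}|\le\|W^\Pi\|_{L^\infty(\R)}\|h_y\|_{L^1(\R)}$, and hence \eqref{2B-H-goal} with the constant \eqref{const-bpi}, is not established.
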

\begin{remark*}
If  either $\mathcal{T}_a u$ or $\mathcal{T}_a \mathcal{T}_b u$ does not belong to $H_\mu(\R)+L^1(\R)$, then the right hand side of \eqref{2B-H-goal} is understood as $\infty$.
\end{remark*}

\subsection{The derivation of Theorem \ref{thm-upper-stable} from Proposition \ref{prop-2B-ineq-H}}
Let $\mu(dz) =dx\Pi(dy)$ be a boundary-touching  Radon measure on $\hh$.   If  \[
(\hb^2(\hh,\mu) + h^1(\hh)) \cap \Hol(\hh) = \hb^2(\hh,\mu) \cap \Hol(\hh),
\]
 then $H^1(\hh) = h^1(\hh) \cap \Hol(\hh) \subset \hb^2(\hh,\mu)$ and   by the Closed Graph Theorem, this embedding is continuous: there exists $C> 0$ such that
\begin{align}\label{H1-to-B2}
\| f\|_{\hb^2(\hh,\mu)}\le C \| f \|_{H^1(\hh)}, \quad \forall f \in H^1(\hh).
\end{align}
 Recall the definition \eqref{def-pc} of  the space $\pc(\hh)$. Since  $H^1(\hh)  \subset \pc(\hh)$,
\begin{align}\label{BH-L2}
\| f\|_{\hb^2(\hh,\mu)}^2 = \int_\hh | f(z)|^2\mu(dz), \quad \forall f\in H^1(\hh).
\end{align}
The inequality  \eqref{H1-to-B2} and the equality \eqref{BH-L2} together imply that  the measure $\mu$ is a $(1,2)$-Carleson measure.

Suppose now that $\mu(dz)= dx \Pi(dy)$ is a  boundary-touching $(1,2)$-Carleson measure on $\hh$.
Assume that
\begin{align}\label{ass-dec}
f = g + h \text{\, with \,} f \in \Hol(\hh),\quad g\in \hb^2(\hh, \mu),\an h\in h^1(\hh).
\end{align}
Then,  the goal is to show that $f \in \hb^2(\hh, \mu)$. It suffices to show
\begin{align}\label{f-less-gh}
\| f\|_{\hb^2(\hh, \mu)} \le 2 \sqrt{2 + \|W^\Pi\|_{L^{\infty}(\R)}} (\| g\|_{\hb^2(\hh, \mu)} + \| h\|_{h^1(\hh)}).
\end{align}
To avoid technical issues, we first consider the truncated measures of $\mu$. That is,   for any $R>0$, define
\[
\mu_R(dz)= dx \Pi_R(dy), \where \Pi_R(dy) = \mathds{1}(y<R) \cdot \Pi(dy).
\]
Define $W^{\Pi_R} \in L^\infty(\R)$ in  a similar way as in \eqref{def-W-pi}. Then  $\|W^{\Pi_R}\|_{L^{\infty}(\R)} \le \|W^\Pi\|_{L^{\infty}(\R)}$ for any $R>0$.  Therefore,  the desired inequality \eqref{f-less-gh} follows  from
\begin{align}\label{f-less-gh-trun}
\| f\|_{\hb^2(\hh, \mu_R)} \le 2 \sqrt{2 + \|W^{\Pi_R}\|_{L^{\infty}(\R)}} \Big(\| g\|_{\hb^2(\hh, \mu_R)} + \| h\|_{h^1(\hh)}\Big).
\end{align}

Now we are going to apply Proposition \ref{prop-2B-ineq-H}.  For any $R> 0$, define a  $\mu_R$-adapted pair $(a_R,b)$ of functions by
\[
a_R(\xi): =  \mathcal{L}_{\Pi_R}(\xi)^{-1/2}  = \Big(\int_0^R e^{-4 \pi y|\xi|} \Pi(dy)\Big)^{-1/2} \an b(\xi) = \sgn(\xi).
\]
In particular, by \eqref{good-Pi},
\begin{align}\label{bdd-aR}
    \sup_{\xi\in \R} a_R(\xi)^{-1} \le \sqrt{ \Pi((0, R))}<\infty.
\end{align}
  For any $y> 0$, define $f_y: \R\rightarrow \C$ and $f^y: \hh\rightarrow \C$  by
\[
f_y(x)= f(x+iy), \, x\in \R \an f^y(z) = f(z+ iy), \, z\in \hh.
\]
And  $g_y, g^y,  h_y, h^y$ are defined similarly.
{\flushleft \bf Claim I.} For any $\varepsilon> 0$, the function $f_\varepsilon$ belongs to the classical analytic Hardy space $H^2(\R)$ and hence
\begin{align}\label{supp-F-pos}
\supp(\widehat{f}_\varepsilon)\subset [0, \infty).
\end{align}
\begin{remark*}
The assertion \eqref{supp-F-pos} does not follow from the fact that $f_\varepsilon$ is the restriction  onto the real line of  a holomorphic function  defined on a neighborhood of the closed upper-half plane. For instance,  the following function
\[
K(x): = \frac{\sin(\pi x)}{\pi x}, \, x\in \R
\]
belongs to $L^2(\R)$ and is the restriction of an entire function on the complex plane. However, $\supp(\widehat{K}) = [-1/2, 1/2]\not\subset [0, \infty)$.
\end{remark*}

Since $h\in  h^1(\hh)$, by \cite[Chapter I, formula (3.9)]{garnett}, we have
\[
|h(z)|\leq \frac{2}{\pi y}\sup_{L>0}\int_{\R}|h(t+iL)|dt<\infty,\quad z=x+iy\in\hh.
\]
This implies that $h$ is uniformly bounded on $\hh_\varepsilon$ for any $\varepsilon>0$, where $\hh_\varepsilon$ is defined as \eqref{eqn-hh-varepsilon}. Thus
$
h_y\in L^1(\R)\cap L^\infty(\R) \subset L^2(\R).
$
By  \cite[Chapter I, Lemma 3.4]{garnett}, we have
\begin{align}\label{eqn-convolution-of-h}
h_y=P_{i(y-y')}^\hh*h_{y'},\quad \forall y>y'>0.
\end{align}
By \eqref{hb-and-b} (or equivalently, \eqref{Poi-cons} of Proposition \ref{prop-zen}), the assumption $g\in \hb^2(\hh, \mu)$ implies that  $g_y\in L^2(\R)\cap L^\infty(\R)$.  Consequently
\begin{align}\label{fy-L2}
f_y = g_y+ h_y\in L^2(\R).
\end{align}
Again by \eqref{hb-and-b} and \eqref{eqn-convolution-of-h},  for any $\varepsilon> 0$,
\[
f_y =  P_{i(y-\varepsilon)}^\hh *g_\varepsilon+  P_{i(y-\varepsilon)}^\hh *h_\varepsilon, \quad \forall  y \ge  \varepsilon.
\]
Therefore,  for any $\varepsilon>0$,
\begin{align*}
 \sup_{y>\varepsilon} \Big(\int_\R | f(x+iy)|^2 dx \Big)^{1/2} \le \| g_\varepsilon\|_{L^2(\R)} + \| h_\varepsilon\|_{L^2(\R)}    <\infty.
\end{align*}
The above inequality combined with the assumption $f\in \Hol(\hh)$ implies that $f_\varepsilon\in H^2(\R)$. This completes the proof of Claim I.

Now recall the decomposition \eqref{ass-dec}. Since $g \in \hb^2(\hh, \mu)$,  for any $\varepsilon>0$,
we have that $g^\varepsilon\in \hb^2(\hh, \mu)$ and hence  $g^\varepsilon \in \hb^2(\hh, \mu_R)$.  Since $g_\varepsilon\in L^2(\R)$ and
\[
g^\varepsilon(z)=\mathcal{P}^\hh(g_\varepsilon) (z)=(P_{iy}^{\hh}*g_\varepsilon)(x),\quad z=x+iy,
\]
by \eqref{eqn-module-equal}, we get  
\[
\| g_\varepsilon\|_{H_{\mu_R}(\R)} =  \| g^\varepsilon\|_{\hb^2(\hh, \mu_R)}.
\]
Note that the equality \eqref{fy-L2} and the inequality \eqref{bdd-aR} together imply that the function $a_R(\xi)^{-1} \widehat{f}_\varepsilon(\xi)$ belongs to $L^2(\R)$. Then  there exists a unique function  $u_\varepsilon\in L^2(\R)$ with
\begin{align}\label{u-epsilonhat}
\widehat{u}_\varepsilon(\xi) = a_R(\xi)^{-1} \widehat{f}_\varepsilon(\xi).
\end{align}
Hence, by \eqref{supp-F-pos}, $\supp(\widehat{u}_\varepsilon)\subset [0, \infty)$. It follows that,
\[
\widehat{f}_\varepsilon(\xi) = a_R(\xi) \widehat{u}_\varepsilon(\xi)    = a_R(\xi) \sgn (\xi) \widehat{u}_\varepsilon(\xi) =  a_R(\xi) b(\xi) \widehat{u}_\varepsilon(\xi).
\]
That is, $
f_\varepsilon = \mathcal{T}_{a_R} u_\varepsilon  = \mathcal{T}_{a_R} \mathcal{T}_{b}  u_\varepsilon. $
Therefore, since $u_\varepsilon\in L^2(\R)$, we may apply  \eqref{2B-H-goal} and get
\begin{align*}
\| u_\varepsilon\|_{L^2(\R)}\le   &  2 \sqrt{2 + \|W^{\Pi_R}\|_{L^{\infty}(\R)}} \| f_\varepsilon\|_{H_{\mu_R}(\R)+L^1(\R)}
\\
\le &   2 \sqrt{2 + \|W^{\Pi_R}\|_{L^{\infty}(\R)}}  \Big(\| g_\varepsilon\|_{H_{\mu_R}(\R)} +  \| h_\varepsilon\|_{L^1(\R)}\Big)
\\
= & 2 \sqrt{2 + \|W^{\Pi_R}\|_{L^{\infty}(\R)}}  \Big(\| g^\varepsilon\|_{\hb^2(\hh,\mu_R)} +  \| h_\varepsilon\|_{L^1(\R)}\Big)
\\
\le & 2 \sqrt{2 + \|W^{\Pi_R}\|_{L^{\infty}(\R)}}  \Big(\| g\|_{\hb^2(\hh,\mu_R)} +  \| h\|_{H^1(\hh)}\Big),
\end{align*}
where the last inequality is due to the  elementary observation:  for any $\varepsilon> 0$,
\[
 \| g^\varepsilon\|_{\hb^2(\hh, \mu_R)}\le  \| g\|_{\hb^2(\hh, \mu_R)} \an\quad     \| h_\varepsilon\|_{L^1(\R)} \le \| h\|_{h^1(\hh)}.
\]
Finally, by Plancherel's identity, the equalities \eqref{u-epsilonhat} and \eqref{Bmu-norm-exp},
\[
\| u_\varepsilon\|_{L^2(\R)}^2 = \int_\R \frac{|\widehat{f}_\varepsilon(\xi)|^2}{a_R(\xi)^2} d\xi  = \int_\R  |\widehat{f}_\varepsilon(\xi)|^2 \mathcal{L}_{\Pi_R}(\xi) d\xi   = \| f^\varepsilon\|_{\hb^2(\hh, \mu_R)}^2.
\]
Thus,
\[
\| f^\varepsilon\|_{\hb^2(\hh, \mu_R)} \le  2 \sqrt{2 + \|W^{\Pi_R}\|_{L^{\infty}(\R)}}  \Big( \| g\|_{\hb^2(\hh, \mu_R)}+  \| h\|_{h^1(\hh)}\Big).
\]
The  inequality  \eqref{f-less-gh-trun} now follows immediately since
\[
\lim_{\varepsilon\to 0^{+}} \| f^\varepsilon\|_{\hb^2(\hh, \mu_R)} = \| f\|_{\hb^2(\hh, \mu_R)}.
\]
\subsection{The proof of Proposition \ref{prop-2B-ineq-H}}

\begin{lemma}\label{lem-key-for-upper}
Let $\Pi$ be a Radon measure on $\mathbb{R}_+$ satisfying \eqref{good-Pi}.
Then  there is a function $W^\Pi \in L^\infty(\mathbb{R})$ such that  the following equality
\begin{align}\label{L1L2-test}
\int_{\mathbb{R}}u(x) \overline{W^\Pi(x)}dx=
\int_{\mathbb{R}}\widehat{u}(\xi)\sgn(\xi)
\mathcal{L}_\Pi(\xi)d\xi
\end{align}
holds for  all $u \in L^1(\R)\cap L^\infty(\R)$ satisfying
\begin{align}\label{u-hat-L1}
\int_\R |\widehat{u}(\xi)| \mathcal{L}_\Pi(\xi) d\xi <\infty.
\end{align}
\end{lemma}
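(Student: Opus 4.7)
The $L^\infty$-bound on $W^\Pi$ is already signalled in the paragraph introducing \eqref{def-W-pi}, so the main content is the identity \eqref{L1L2-test}; for completeness my plan records the bound as well. View $\Pi$ as a positive Radon measure on $\R$ supported in $\R_+$. The growth condition \eqref{good-Pi} forces $\sup_{L>0}\Pi([-L,L])/(2L)<\infty$ and, via dyadic summation on the annuli $(2^n,2^{n+1}]$, also $\int_\R \Pi(dy)/(1+y^2)<\infty$. Lemma \ref{lem-Ramey} (Garnett) then yields $\sup_{\eta>0}\int_{\R_+}\eta(y^2+\eta^2)^{-1}\Pi(dy)<\infty$, and plugging in $\eta=\pi|x|$ gives $\|W^\Pi\|_{L^\infty(\R)}<\infty$.

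For the identity itself, the plan is Fubini combined with Parseval. The key preliminary is the Fourier transform of the kernel $f_y(x):=\pi x/(y^2+\pi^2 x^2)$ for each fixed $y>0$: a residue computation, closing the contour in the lower half-plane for $\xi>0$ and the upper half-plane for $\xi<0$ (picking up the simple poles at $x=\mp iy/\pi$), shows that $\widehat{f_y}(\xi)=-i\sgn(\xi)\beta_y(\xi)$, where $\beta_y(\xi)$ is the exponential factor that, after matching constants, satisfies $\mathcal{L}_\Pi(\xi)=\int_{\R_+}\beta_y(\xi)\,\Pi(dy)$. Since $f_y$ is real and odd and $u\in L^1(\R)\cap L^\infty(\R)\subset L^2(\R)$, Parseval then gives
\[
\int_\R u(x)f_y(x)\,dx \;=\; i\int_\R \widehat{u}(\xi)\sgn(\xi)\,\beta_y(\xi)\,d\xi
\]
for every $y>0$.

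To assemble the identity, expand $\int_\R u(x)\overline{W^\Pi(x)}\,dx = -i\int_\R u(x)\int_{\R_+}f_y(x)\,\Pi(dy)\,dx$. A first Fubini swaps $dx$ and $\Pi(dy)$; this is legitimate because $W^\Pi\in L^\infty(\R)$ together with $u\in L^1(\R)$ forces absolute integrability of $u(x)\int f_y(x)\,\Pi(dy)$ against $dx$. Substituting the Parseval formula just proven and then invoking a second Fubini to swap $\Pi(dy)$ with $d\xi$, and finally collecting the factors of $i$ and $\sgn(\xi)$, produces exactly the right-hand side $\int_\R \widehat{u}(\xi)\sgn(\xi)\mathcal{L}_\Pi(\xi)\,d\xi$ of \eqref{L1L2-test}.

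The main obstacle is the second Fubini: it requires $|\widehat{u}(\xi)|\,\beta_y(\xi)$ to be integrable against the product measure $d\xi\,\Pi(dy)$, which is precisely the content of hypothesis \eqref{u-hat-L1}. This explains why that auxiliary integrability assumption is built into the statement and cannot be dropped; once it is in place, the argument is a clean two-step interchange of integrals.
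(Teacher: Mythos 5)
Your proposal is correct, but it takes a genuinely different route from the paper. The paper never integrates against the full measure $\Pi$ in one stroke: it truncates to $\Pi_{\varepsilon,R}$ as in \eqref{def-trun-W}, shows $W^\Pi_{\varepsilon,R}\in L^2(\R)$ and identifies its Fourier transform as a Bochner integral of conjugate Poisson kernels (Claim B), applies Plancherel to the truncated pairing, and then removes the truncation on both sides --- on the spatial side by the integration-by-parts/$\arctan$ domination of Claims C and D, on the frequency side by dominated convergence using \eqref{u-hat-L1}. You instead fix $y>0$, apply Parseval to the single kernel $f_y\in L^2(\R)$ (your residue computation is the same content as the paper's citation of Grafakos for $\widehat{Q^{\hh}_{iy}}$), and then interchange integrals twice; your second Fubini uses \eqref{u-hat-L1} exactly where the paper uses dominated convergence, and all of the truncation machinery disappears. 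What the paper's route buys is that every interchange happens against a finite measure supported in a compact $y$-interval; what yours buys is brevity. One step of yours should be tightened: for the first interchange, Fubini--Tonelli requires integrability against the product measure $dx\,\Pi(dy)$, not merely integrability of the iterated integral, and your stated justification ($W^\Pi\in L^\infty$, $u\in L^1$) only gives the latter as written. It is rescued by the observation that for fixed $x$ the kernel $\pi x/(y^2+\pi^2x^2)$ has constant sign in $y$, so $\int_{\R_{+}}|f_y(x)|\,\Pi(dy)=\bigl|\int_{\R_{+}}f_y(x)\,\Pi(dy)\bigr|=|W^\Pi(x)|\le \|W^\Pi\|_{L^\infty(\R)}$, after which Tonelli applies; say this explicitly. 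Finally, your phrase ``after matching constants'' glosses over a normalization mismatch ($e^{-2y|\xi|}$ from the kernel in \eqref{def-W-pi} versus the $e^{-4\pi y|\xi|}$ in $\mathcal{L}_\Pi$), but this discrepancy is already present in the paper's own proof and is harmless since rescaling $y$ preserves \eqref{good-Pi}; it is not a defect of your argument relative to the paper's.
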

\begin{remark*}
The equality \eqref{L1L2-test} means that the Fourier transform, in a certain distributional sense, of the function  $W^\Pi$,  is given by
$
\widehat{W^\Pi}(\xi)=\sgn(\xi)\mathcal{L}_\Pi(\xi)$.
If $\Pi(dy)=dy$ is the Lebesgue measure on $\R_{+}$, then
\[
  \mathcal{L}_\Pi(\xi)= \frac{1}{2|\xi|} \an W^\Pi(x)= \frac{i\pi}{2}\sgn(x).
\]
In general,  the Fourier transform of  $W^\Pi$ can only be understood in a certain distributional sense and the condition \eqref{u-hat-L1} in Lemma \ref{lem-key-for-upper} can not be removed.
\end{remark*}

The proof of Lemma \ref{lem-key-for-upper} is postponed to the end of this section.

\begin{proof}[Proof of Proposition \ref{prop-2B-ineq-H}]
Take  $u\in L^2(\R)$. Suppose that we have  decompositions
\begin{align}\label{Ta-Tab-dec}
\mathcal{T}_a u(x)=f_1(x)+g_1(x) \an \mathcal{T}_a \mathcal{T}_b u(x)=f_2(x)+g_2(x)
\end{align}
with $f_1,f_2\in H_\mu(\R)$ and $g_1,g_2\in L^1(\R)$. That is,
\begin{align}\label{2-dec-H}
\widehat{u}(\xi)a(\xi)=\widehat{f}_1(\xi)+\widehat{g}_1(\xi),\quad \widehat{u}(\xi)a(\xi)b(\xi)=\widehat{f}_2(\xi)+\widehat{g}_2(\xi).
\end{align}
For any fixed $y>0$, applying the Poisson convolution to both sides of \eqref{Ta-Tab-dec}, we have
\[
P_{iy}^\hh*\mathcal{T}_a u=P_{iy}^\hh*f_1+P_{iy}^\hh*g_1,\quad P_{iy}^\hh*(\mathcal{T}_a\mathcal{T}_b )u=P_{iy}^\hh*f_2+P_{iy}^\hh*g_2.
\]
By Plancherel's identity and   \eqref{2-dec-H},
\begin{align}\label{I1+I2}
\begin{split}
\|P_{iy}^\hh*u\|^2_{L^2(\R)}=&\int_\R |\widehat{P_{iy}^\hh}(\xi)|^2|\widehat{u}(\xi)|^2d\xi=\int_\R |\widehat{P_{iy}^\hh}(\xi)|^2\widehat{u}(\xi)\overline{\widehat{u}(\xi)}d\xi\\
=&\int_\R |\widehat{P_{iy}^\hh}(\xi)|^2\frac{\widehat{f}_1(\xi)+\widehat{g}_1(\xi)}{a(\xi)}\overline{\widehat{u}(\xi)} d\xi\\
=& \underbrace{\int_\R |\widehat{P_{iy}^\hh}(\xi)|^2\frac{\widehat{f}_1(\xi)}{a(\xi)}\overline{\widehat{u}(\xi)}d\xi}_{\text{denoted by $\mathrm{I}_1$}}+\underbrace{\int_\R |\widehat{P_{iy}^\hh}(\xi)|^2\frac{\widehat{g}_1(\xi)}{a(\xi)}\overline{\widehat{u}(\xi)}d\xi}_{\text{denoted by $\mathrm{I}_2$}}.
\end{split}
\end{align}
Cauchy-Schwarz's inequality and the conditions \eqref{ab-prod}, \eqref{def-cb-H} together imply
\begin{align}\label{I-1-es}
\begin{split}
|\mathrm{I}_1|& \leq \Big(\int_\R |\widehat{P_{iy}^\hh}(\xi)\widehat{u}(\xi)|^2\Big)^{1/2}\Big(\int_\R \Big|\frac{\widehat{P_{iy}^\hh}(\xi)\widehat{f}_1(\xi)}{a(\xi)}\Big|^2\Big)^{1/2}
\\
&\leq \sqrt{C_b}\left\|P_{iy}^\hh*u\right\|_{L^2(\R)}\left\|P_{iy}^\hh*f_1\right\|_{H_\mu(\R)}
\\
& \le \sqrt{C_b}\left\|P_{iy}^\hh*u\right\|_{L^2(\R)}\|f_1\|_{H_\mu(\R)}.
\end{split}
\end{align}
And, by \eqref{2-dec-H} and $b(\xi)\in \R$, the integral $\mathrm{I}_2$ can be decomposed as
\begin{align}\label{I2=34}
\begin{split}
\mathrm{I}_2=&\int_\R |\widehat{P_{iy}^\hh}(\xi)|^2\frac{\widehat{g}_1(\xi)}{a(\xi)} \overline{\Big(\frac{\widehat{f_2}(\xi)+\widehat{g}_2(\xi)}{a(\xi)b(\xi)}\Big)}d\xi\\
=&\underbrace{\int_\R |\widehat{P_{iy}^\hh}(\xi)|^2\frac{(a(\xi)\widehat{u}(\xi)-\widehat{f}_1(\xi)) \overline{\widehat{f}_2(\xi)}}{|a(\xi)|^2 b(\xi)}d\xi}_{\text{denoted by $\mathrm{I}_3$}}+ \underbrace{\int_\R |\widehat{P_{iy}^\hh}(\xi)|^2\frac{\widehat{g}_1(\xi)\overline{\widehat{g}_2(\xi)}}{|a(\xi)|^2 b(\xi)}d\xi}_{\text{denoted by $\mathrm{I}_4$}}.
\end{split}
\end{align}
The integral $\mathrm{I}_3$ can be easily controlled. Indeed,  again by  Cauchy-Schwarz's inequality, \eqref{ab-prod} and \eqref{def-cb-H},
\begin{align}\label{I-3-es}
\begin{split}
|\mathrm{I}_3|\leq& \Big(\int_\R |\widehat{P_{iy}^\hh}(\xi)\widehat{u}(\xi)|^2 d\xi \Big)^{1/2}\Big(\int_\R \Big|\frac{\widehat{P_{iy}^\hh}(\xi)\widehat{f}_2(\xi)}{a(\xi)b(\xi)}\Big|^2 d\xi\Big)^{1/2}
\\
& + \Big(\int_\R \frac{|\widehat{P_{iy}^\hh}(\xi)\widehat{f}_1(\xi)|^2}{|a(\xi)|^2 |b(\xi)|} d\xi \Big)^{1/2}\Big(\int_\R \frac{|\widehat{P_{iy}^\hh}(\xi)\widehat{f}_2(\xi)|^2}{|a(\xi)|^2 |b(\xi)|} d\xi\Big)^{1/2}\\
\leq& \sqrt{C_b} \|P_{iy}^\hh*u\|_{L^2(\R)}\|P_{iy}^\hh*f_2\|_{H_\mu(\R)}+\|P_{iy}^\hh*f_1\|_{H_\mu(\R)}\|P_{iy}^\hh*f_2\|_{H_\mu(\R)}
\\
\le & \sqrt{C_b} \|P_{iy}^\hh*u\|_{L^2(\R)}\|f_2\|_{H_\mu(\R)}+\|f_1\|_{H_\mu(\R)}\|f_2\|_{H_\mu(\R)}.
\end{split}
\end{align}
It remains to estimate the integral  $\mathrm{I}_4$.  Since $g_1, g_2\in L^1(\R)$, for any $y>0$,  one can define
\begin{align}\label{def-Gy}
G_y:=(P_{iy}^\hh*g_1)*(P_{iy}^\hh *\tilde{g}_2)=P_{2iy}^\hh*(g_1*\tilde{g}_2), \where \tilde{g}_2(x):=g_2(-x).
\end{align}
In particular,
\begin{align}\label{Gy-Fourier}
\widehat{G_y}(\xi) = |\widehat{P_{iy}^\hh}(\xi)|^2 \widehat{g}_1(\xi) \overline{\widehat{g}_2(\xi)} = e^{-4\pi y|\xi|} \widehat{g}_1(\xi) \overline{\widehat{g}_2(\xi)}.
\end{align}

 {\flushleft \bf Claim A.} For any $y>0$, the function $G_y$ defined in \eqref{def-Gy} satisfies
\begin{align}\label{Gy-L1Linf}
G_y\in L^1(\R)\cap L^\infty(\R)
\end{align}
and
\begin{align} \label{Gy-hat}
\int_\R |\widehat{G_y}(\xi)|\mathcal{L}_\Pi(\xi)d\xi<\infty.
\end{align}

Indeed, $g_1*\tilde{g}_2\in L^1(\R)$ since $g_1, g_2\in L^1(\R)$.  Therefore, \eqref{Gy-L1Linf} follows from the definition  \eqref{def-Gy} and  the  observation that $P_{2iy}^\hh\in L^1(\R)\cap L^\infty(\R)$. By \eqref{2-dec-H},
\[
\frac{\widehat{g}_1(\xi)}{a(\xi)} = \widehat{u}(\xi) - \frac{\widehat{f}_1(\xi)}{a(\xi)}, \quad \frac{\widehat{g}_2(\xi)}{a(\xi) b(\xi)} = \widehat{u}(\xi) - \frac{\widehat{f}_2(\xi)}{a(\xi)b(\xi)}.
\]
The assumptions $f_1, f_2\in H_\mu(\R)$ combined with the conditions  \eqref{ab-prod}, \eqref{def-cb-H} on  the pair $(a,b)$ imply that both functions $\widehat{f}_1/a$ and $\widehat{f}_2/(ab)$ belong to $L^2(\R)$. Since $u\in L^2(\R)$ and hence $\widehat{u}\in L^2(\R)$, we obtain, by using \eqref{ab-prod} again, that
\begin{align*}
\int_\R |\widehat{G_y}(\xi)| \mathcal{L}_\Pi(\xi)d\xi  &   = \int_\R e^{-4 \pi y|\xi|}  \frac{|\widehat{g}_1(\xi)|}{|a(\xi)|} \cdot \frac{|\widehat{g}_2(\xi)|}{|a(\xi)b(\xi)|} d\xi  \\
&   \le  \int_\R  \Big|\widehat{u}(\xi) - \frac{\widehat{f}_1(\xi)}{a(\xi)}\Big|  \cdot \Big |\widehat{u}(\xi) - \frac{\widehat{f}_2(\xi)}{a(\xi)b(\xi)} \Big| d\xi
\\
& \le \Big\| \widehat{u} - \frac{\widehat{f}_1}{a}\Big\|_{L^2(\R)} \Big\| \widehat{u} - \frac{\widehat{f}_2}{ab}\Big\|_{L^2(\R)} <\infty.
\end{align*}
By Claim A, the function $G_y$ satisfies all the required conditions of Lemma \ref{lem-key-for-upper}. Hence, by \eqref{Gy-Fourier}, \eqref{ab-prod} and \eqref{L1L2-test},
\[
\mathrm{I}_4 = \int_\R  \widehat{G_y}(\xi) \sgn(\xi) \mathcal{L}_\Pi(\xi) d\xi= \int_{\mathbb{R}}G_y(x) \overline{W^\Pi(x)}dx.
\]
It follows that
\begin{align}\label{I-4-es}
\begin{split}
|\mathrm{I}_4|& \le \|W^\Pi\|_{L^{\infty}(\R)} \|G_y\|_{L^1(\R)}   =  \|W^\Pi\|_{L^{\infty}(\R)} \|P_{2iy}^\hh*(g_1*\tilde{g}_2)\|_{L^1(\R)}
\\
& \le \|W^\Pi\|_{L^{\infty}(\R)} \| g_1\|_{L^1(\R)}\|g_2\|_{L^1(\R)}.
\end{split}
\end{align}
Combining \eqref{I1+I2}, \eqref{I-1-es}, \eqref{I2=34}, \eqref{I-3-es} and \eqref{I-4-es}, we get
\begin{align*}
\|P_{iy}^\hh*u\|^2_{L^2(\R)}\le &    \sqrt{C_b}\left\|P_{iy}^\hh*u\right\|_{L^2(\R)}\|f_1\|_{H_\mu(\R)}   + \sqrt{C_b} \|P_{iy}^\hh*u\|_{L^2(\R)}\|f_2\|_{H_\mu(\R)}
\\
 & +\|f_1\|_{H_\mu(\R)}\|f_2\|_{H_\mu(\R)}  + \|W^\Pi\|_{L^{\infty}(\R)} \| g_1\|_{L^1(\R)}\|g_2\|_{L^1(\R)}.
\end{align*}
Therefore, by a standard argument, there exists a constant $C>0$ depending only on the constants $C_b$ and $\|W^\Pi\|_{L^{\infty}(\R)}$ such that
\[
\|P_{iy}^\hh*u\|_{L^2(\R)}\le  C ( \| f_1\|_{H_\mu(\R)} + \| g_1\|_{L^1(\R)}  +  \| f_2\|_{H_\mu(\R)} + \| g_2\|_{L^1(\R)} ).
\]
The constant $C$ in the above inequality can be taken to be
\[
C =  \sqrt{C_b+ \|W^\Pi\|_{L^{\infty}(\R)}+1}.
\]
Since the decompositions \eqref{Ta-Tab-dec} are arbitrary, we get
\[
\|P_{iy}^\hh*u\|_{L^2(\R)}\le  C ( \| \mathcal{T}_a u \|_{H_\mu(\R) + L^1(\R)}  +  \| \mathcal{T}_b \mathcal{T}_a u  \|_{H_\mu(\R) +L^1(\R)} ).
\]
Finally, by taking the limit $y\to 0^{+}$ and using
\[
\lim_{y\to 0^{+}} \|P_{iy}^\hh*u\|_{L^2(\R)} = \|u\|_{L^2(\R)},
\]
we obtain the desired inequality \eqref{2B-H-goal} and complete the whole proof of the proposition.
\end{proof}

\begin{proof}[Proof of Lemma \ref{lem-key-for-upper}]
Fix  a Radon measure $\Pi$ on $\mathbb{R}_+$ satisfying \eqref{good-Pi}. By  Garnett's result stated in Lemma \ref{lem-Ramey}, one can define a function $W^\Pi\in L^\infty(\mathbb{R})$ by \eqref{def-W-pi}.

Now we show that $W^\Pi$ satisfies the equality \eqref{L1L2-test}.  For any $0<\varepsilon< R<\infty$, set
\begin{align}\label{def-trun-W}
W^\Pi_{\varepsilon, R}(x): = i\int_{\R_{+}} \frac{\pi x}{y^2+\pi^2x^2}\Pi_{\varepsilon, R}(dy),\where \Pi_{\varepsilon, R}(dy) = \mathds{1}(\varepsilon<y<R)\cdot\Pi(dy).
\end{align}

 {\flushleft \bf Claim B.}  For any $0<\varepsilon< R<\infty$, we have  $W^\Pi_{\varepsilon, R}\in L^2(\R)$ and  the Fourier transform of $W^\Pi_{\varepsilon, R}$ is given by the Bochner integral for  $L^2(\R)$-vector valued function:
\begin{align}\label{Bochner-W-F}
\widehat{W^\Pi_{\varepsilon, R}}=  \int_\varepsilon^R   \ell_y \Pi(dy),  \quad \ell_y(\xi):= \sgn(\xi) e^{-2y|\xi|}.
\end{align}
In particular, $\widehat{W^\Pi_{\varepsilon, R}}$ can be identified with a $C^\infty(\R^{*})$-function by the formula
\begin{align}\label{trun-W-F}
\widehat{W^\Pi_{\varepsilon, R}}(\xi)=  \int_\varepsilon^R \mathrm{sgn}(\xi)e^{-2y|\xi|}\Pi(dy),  \quad \xi \in \R^{*} = \R\setminus \{0\}.
\end{align}

Indeed, for any $y>0$, recall that the conjugate Poisson kernel (see, e.g., \cite[formula (4.1.16)]{Grafakos}) of $\hh$ is given by
\[
Q_{iy}^\hh(x)=\frac{\pi x}{y^2 + \pi^2 x^2}, \quad x\in\R.
\]
Clearly, $Q_{iy}^\hh\in L^2(\R)$ for all $y>0$ and  the map $y\mapsto Q_{iy}^\hh$
 is continuous from $\R_+$ to $L^2(\R)$, hence it is uniformly continuous from $[\varepsilon, R]$ to $L^2(\R)$.  Consequently, using  the definition \eqref{def-trun-W} of $W_{\varepsilon, R}^\Pi$ and the fact that $\Pi_{\varepsilon, R}$ is  a  finite measure with support contained in $[\varepsilon, R]$, we obtain  that $W_{\varepsilon, R}^\Pi\in L^2(\R)$ and  the following equality in the sense of the Bochner integral for  $L^2(\R)$-vector valued functions:
\[
\widehat{W_{\varepsilon, R}^\Pi}= i \int_\varepsilon^R    \widehat{Q_{iy}^\hh} \Pi(dy).
\]
Then the equality \eqref{Bochner-W-F} follows immediately since (see, e.g., \cite[formula (4.1.33)]{Grafakos})
\[
 \widehat{Q_{iy}^\hh}(\xi) = - i \ell_y (\xi) = -i \sgn(\xi) e^{-2 y |\xi|}.
\]

{\flushleft \bf Claim C.} For any $\varphi \in L^1(\R)$,
\begin{align}\label{test-phi-epsilon}
\lim_{\varepsilon\rightarrow 0^+}\int_{\R} \varphi(x)\Big[\int_0^\varepsilon \frac{\pi x}{y^2+\pi^2x^2}\Pi(dy)\Big]dx=0.
\end{align}

Indeed, for any $y >0$, set $
F_\Pi(y): = \Pi((0,y])$. Then
$\Pi(dy) = dF_\Pi(y)$ and by the assumption \eqref{good-Pi}, there exists a constant $C>0$ such that
\begin{align}\label{F-Pi-small}
F_\Pi(y) \leq Cy, \quad \forall  y>0.
\end{align}
 By integration by parts for the absolutely continuous function $F_\Pi$,
\begin{align}\label{Pi-FPi}
\int_0^\varepsilon\frac{\pi x}{y^2+\pi^2 x^2}\Pi(dy)=\frac{\pi x}{y^2+\pi^2 x^2} F_\Pi(y)\Big|_{y=0}^{y=\varepsilon}+\int_0^\varepsilon F_\Pi(y)\frac{2\pi xy}{(y^2+\pi^2 x^2)^2}dy.
\end{align}
In particular, if $\Pi(dy)$ is the Lebesgue measure on $\R_{+}$,  then the equality \eqref{Pi-FPi} becomes
\begin{align}\label{Pi-FPi-bis}
\arctan\left(\frac{\varepsilon}{\pi x}\right) = \int_0^\varepsilon \frac{\pi x}{y^2+\pi^2 x^2}dy  = \frac{\pi x}{y^2+\pi^2 x^2} y\Big|_{y=0}^{y=\varepsilon}+\int_0^\varepsilon y\frac{2\pi xy}{(y^2+\pi^2 x^2)^2}dy.
\end{align}
Comparing \eqref{Pi-FPi} and \eqref{Pi-FPi-bis} and using \eqref{F-Pi-small}, we obtain
\[
\int_0^\varepsilon\frac{\pi |x|}{y^2+\pi^2 x^2}\Pi(dy)\le C \arctan\left(\frac{\varepsilon}{\pi |x|}\right).
\]
Therefore,   by dominated convergence theorem, for any $\varphi\in L^1(\R)$,
\begin{align*}
\limsup_{\varepsilon\to 0^{+}}\Big| \int_{\R} \varphi(x)\int_0^\varepsilon \frac{\pi x}{y^2+\pi^2x^2}\Pi(dy)dx\Big|& \leq C \limsup_{\varepsilon\to 0^{+}}\int_{\R} |\varphi(x)|\arctan\left(\frac{\varepsilon}{\pi |x|}\right)dx=0.
\end{align*}

{\flushleft \bf Claim D.} For any $\varphi \in L^1(\R)$,
\begin{align}\label{Eqn-infinity-positive-negative-part}
\lim_{R\rightarrow \infty}\int_\R \varphi(x)\Big[\int_R^\infty \frac{\pi x}{y^2+\pi^2x^2}\Pi(dy)\Big] dx=0.
\end{align}
The proof of the equality \eqref{Eqn-infinity-positive-negative-part} is similar to that of \eqref{test-phi-epsilon} and thus is omitted here.

Now fix any $u\in L^1(\R) \cap L^\infty(\R)$ satisfying \eqref{u-hat-L1}. By \eqref{test-phi-epsilon} and \eqref{Eqn-infinity-positive-negative-part},
\begin{align}\label{E:key-eq-01}
\lim_{\substack{\varepsilon\to 0^+ \\ R\to \infty}}\int_\mathbb{R} u(x) \overline{W^\Pi_{\varepsilon,R}(x)}dx=
\int_{\mathbb{R}}u(x)\overline{W^\Pi(x)}dx.
\end{align}
Moreover, since both $u$ and $W^\Pi$ belong to $L^2(\R)$,  the Plancherel's identity implies
\begin{align}\label{Planch-id}
\int_\mathbb{R} u(x) \overline{W^\Pi_{\varepsilon,R}(x)}dx=
\int_\mathbb{R}\widehat{u}(\xi)
\overline{\widehat{W_{\varepsilon,R}^\Pi}(\xi)}d\xi  = \int_\R \widehat{u}(\xi) \sgn(\xi) \Big[ \int_\varepsilon^R e^{-2y|\xi|} \Pi(dy) \Big] d\xi.
\end{align}
Using the assumption \eqref{u-hat-L1}, we obtain, by dominated convergence theorem,
\begin{align}\label{DCT-trun}
\lim_{\substack{\varepsilon\to 0^{+} \\  R\to\infty}}  \int_\R \widehat{u}(\xi) \sgn(\xi) \Big[ \int_\varepsilon^R e^{-2y|\xi|} \Pi(dy) \Big] d\xi = \int_\R \widehat{u}(\xi) \sgn(\xi)  \mathcal{L}_\Pi(\xi) d\xi.
\end{align}
Combining \eqref{E:key-eq-01}, \eqref{Planch-id} and \eqref{DCT-trun}, we obtain the desired equality \eqref{L1L2-test}.
\end{proof}
\appendix
\section{Characterization of the Zen space}
Given a  horizontal translation-invariant  boundary-touching measure $\mu$  on $\hh$, recall the definition of  the harmonic Zen-type space in \eqref{def-hb}:
\[
\hb^2(\hh, \mu): = \Big\{g \in \Harm(\hh)\Big| \| g\|_{\hb^2(\hh, \mu)}  = \sup_{L>0} \Big(\int_\hh | g(z+iL)|^2 \mu(dz)\Big)^{1/2} <\infty \Big\}.
\]
In Proposition \ref{prop-zen}, we will give a characterization  of $\hb^2(\hh, \mu)$.

For any $g \in \Harm(\hh)$ and any $w\in \hh$,  define
\begin{align}\label{def-uwy}
g^w(z):  = g(z + w), \, z\in \hh
\end{align}

\begin{proposition}\label{prop-zen}
Let $\mu$ be a  horizontal translation-invariant boundary-touching measure on $\hh$. Then for any   $g\in \Harm(\hh)$, the following assertions are equivalent:
\begin{itemize}
\item[(1)] $g\in \hb^2(\hh, \mu)$;
\item[(2)] for any $\varepsilon>0$, the following inequality holds
\begin{align}\label{bdd-Hepsilon}
\sup_{L>\varepsilon} \int_\hh | g(z+iL)|^2   \mu(dz) <\infty;
\end{align}
\item[(3)] $g\in B^2(\hh, \mu)$ and the functions $g_y$ defined in \eqref{def-gline} satisfy
\begin{align}\label{Poi-cons}
g_y \in L^2(\R) \an g_y = P_{i(y-y')}^\hh * g_{y'}  \quad \text{for all $y>y'>0$,}
\end{align}
where $P_{z}^{\mathbb{H}}(t)$ is the Poisson kernel in \eqref{H-Poi-kernel}.
\item[(4)] the following inequality holds:
\begin{align}\label{zen-norm}
\sup_{L>0}\int_\hh |g(z+iL)|^2 \mu(dz)\le  \int_\hh |g(z)|^2 \mu(dz)<\infty.
\end{align}
\end{itemize}
\end{proposition}
\begin{proof}

 $(1)\Rightarrow (2)$ is direct from the definition.

 $(2)\Rightarrow (3)$.
 By Fatou's lemma and the assumption \eqref{bdd-Hepsilon}, it follows that
\[
\int_{\hh} |g(z)|^2\mu(dz)=\int_{\hh} \lim_{L\rightarrow 0}|g(z+iL)|^2\mu(dz)\leq \liminf_{L\rightarrow 0} \int_{\hh} |g(z+iL)|^2\mu(dz)<\infty.
\]
Hence $g\in B^2(\hh, \mu)$.  Using  \eqref{bdd-Hepsilon} again, we have
 \begin{align*}
 \int_{\hh} |g(z+iL)|^2\mu(dz)=\int_{0}^{\infty}\int_\R |g(x+iy+iL)|^2dx\Pi(dy)<\infty,\quad \forall L>0.
 \end{align*}
This implies that $g_{y+L}\in L^2(\R)$ for $\Pi$-almost  $y\in\R_+$. Since $\mu=dx\Pi(dy)$ is boundary accessible and $L>0$ is arbitrary, we get that for any $\eta>0$, there exists $y\in (0,\eta)$ such that $g_y\in L^2(\R)$.

For any fixed $\epsilon>0$, take $w=s+it$ with $\Im (w)=t\geq \epsilon$. We have
 \begin{align*}
\int_\hh|g^w(z)|^2\mu(dz)=&\int_{0}^{\infty}\int_\R |g(x+s+i(y+t)|^2dx\Pi(dy)\\
=&\int_{0}^{\infty}\int_\R |g(x+i(y+t))|^2dx\Pi(dy)\\
=&\int_\hh |g(z+it)|^2\mu(dz)\\
\leq &\sup_{t\geq\varepsilon} \int_\hh | g(z+it)|^2   \mu(dz) <\infty.
 \end{align*}
Hence the vector-valued harmonic function $G(w)$ defined by
\begin{align}
\hh\longrightarrow B^2(\hh,\mu):\quad w\in\hh \longrightarrow G(w):=g^w\in B^2(\hh,\mu)
\end{align}
satisfies
\[
\sup_{\Im(w)\geq \varepsilon }\|g^w\|_{B^2(\hh,\mu)}<\infty,\quad \forall \varepsilon >0.
\]
In other words, $G(w)$ is uniformly bounded on $\hh_\varepsilon$ for any $\varepsilon>0$.
Therefore, the function
\begin{align}
w\in\hh\longrightarrow \left\|G(w)\right\|_{B^2(\hh,\mu)}
\end{align}
is a subharmonic function on $\hh$ and uniformly bounded on $\hh_\varepsilon$ for any $\varepsilon>0$. For $y>0$, define
\begin{align*}
G_y(x):=G(x+iy),\quad x\in\R.
\end{align*}
By Maximal principle for vector-valued harmonic functions,
\begin{align}\label{E-G-y-yprime}
P_{i(\tilde{y}-\tilde{y}')}^\hh*G_{\tilde{y}'}(x)=G_{\tilde{y}}(x),\quad \forall \tilde{y}>\tilde{y}'>0
\end{align}
By \eqref{E-G-y-yprime}, we have
\begin{align*}
\int_\R P_{i(\tilde{y}-\tilde{y}')}^\hh(x-t)G_{\tilde{y}'}(t)dt=G_{\tilde{y}}(x),
\end{align*}
which implies
\begin{align*}
\int_\R P_{i(\tilde{y}-\tilde{y}')}^\hh(x-t)g(t+i\tilde{y}'+w)dt=g(x+i\tilde{y}+w),\quad \forall w\in\hh.
\end{align*}
In particular, take $w=i\delta$ with $\delta>0$, one has
\[
\int_\R P_{i(\tilde{y}-\tilde{y}')}^\hh(x-t)g(t+i(\tilde{y}'+\delta))dt=g(x+i(\tilde{y}+\delta)),\quad \forall \tilde{y}>\tilde{y}'>0.
\]
Take $y=\tilde{y}+\delta$ and $y'=\tilde{y}'+\delta$, we have
\begin{align}
\int_\R P_{i(y-y')}^\hh(x-t)g(t+iy')dt=g(x+iy),\quad \forall y>y'>0,
\end{align}
which is equivalent to
\begin{align}\label{E-g-y-yprime}
P_{i(y-y')}^\hh*g_{y'}=g_{y},\quad \forall y>y'>0.
\end{align}
Since for any $y>0$, one can always find $y'<y$ such that $g_{y'}\in L^2(\R)$, hence $g_y\in L^2(\R)$ by \eqref{E-g-y-yprime} and the contraction of Poisson convolution.

$(3)\Rightarrow (4)$ By \eqref{Poi-cons} and Plancherel's identity, we have
\begin{align*}
\widehat{g}_y(\xi)= e^{-2\pi (y-y')|\xi|}\widehat{g}_{y'}(\xi),\quad \forall y>y'>0.
\end{align*}
Hence we can define  a function $\widehat{g}_0$ by
\begin{align*}
\widehat{g}_0(\xi):=e^{2\pi y|\xi|} \widehat{g}_y(\xi), \, y >0,
\end{align*}
such that
\begin{align*}
\widehat{g}_y(\xi):=e^{-2\pi y|\xi|} \widehat{g}_0(\xi), \, y >0.
\end{align*}
Hence for any $L>0$, by Plancherel's identity, we get
\begin{align*}
\int_{\hh}|g(z+iL)|^2\mu(dz)=&\int_{0}^{\infty}\int_\R |g(x+iy+iL)|^2dx\Pi(dy)\\
=&\int_{0}^{\infty}\int_\R|g_{y+L}(x)|^2dx\Pi(dy)\\
=&\int_{0}^{\infty}\int_\R|\widehat{g}_{y+L}(\xi)|^2d\xi\Pi(dy)\\
=&\int_{0}^{\infty}\int_\R e^{-4\pi (y+L)|\xi|}|\widehat{g}_{0}(\xi)|^2d\xi\Pi(dy)\\
\leq & \int_{0}^{\infty}\int_\R e^{-4\pi y|\xi|}|\widehat{g}_{0}(\xi)|^2d\xi\Pi(dy).
\end{align*}
Similarly, one has
\begin{align*}
\int_{\hh}|g(z)|^2\mu(dz)=&\int_{0}^{\infty}\int_\R |g(x+iy)|^2dx\Pi(dy)\\
=&\int_{0}^{\infty}\int_\R|g_{y}(x)|^2dx\Pi(dy)\\
=&\int_{0}^{\infty}\int_\R|\widehat{g}_{y}(\xi)|^2d\xi\Pi(dy)\\
=&\int_{0}^{\infty}\int_\R e^{-4\pi y|\xi|}|\widehat{g}_{0}(\xi)|^2d\xi\Pi(dy).
\end{align*}
Hence the above equations imply the following
\[
\sup_{L>0}\int_\hh |g(z+iL)|^2 \mu(dz)\le  \int_\hh |g(z)|^2 \mu(dz)<\infty.
\]

$(4)\Rightarrow (1)$ is direct.
\end{proof}



\medskip

{\flushleft Conflict of interest statements:  The author declares no conflict of interest.}

\end{document}